\newtheorem{theo}{Theorem}[section]
\newtheorem{lem}[theo]{Lemma}
\newtheorem{prop}[theo]{Proposition}
\newtheorem{coro}[theo]{Corollary}
\theoremstyle{definition}
\newtheorem{defi}[theo]{Definition}
\newtheorem{exe}[theo]{Example}
\theoremstyle{remark}
\newtheorem{rem}[theo]{Remark}
\newtheorem{note}[theo]{Note}
\newcommand{\w}{\omega}
\newcommand{\g}{\gamma}
\newcommand{\al}{\alpha}
\newcommand{\la}{\lambda}
\newcommand{\be}{\beta}
\newcommand{\N}{\mathbb{N}}
\def\thebibliography#1{\section*{{\large REFERENCES}\markboth
 {REFERENCES}{REFERENCES}}\list
 {[\arabic{enumi}]}{\settowidth\labelwidth{[#1]}\leftmargin\labelwidth
 \advance\leftmargin\labelsep
 \usecounter{enumi}}
 \def\newblock{\hskip .11em plus .33em minus -.07em}
 \sloppy
 \sfcode`\.=1000\relax}
\def\qed{\relax\ifmmode\hskip2em \Box\else\unskip\nobreak\hskip1em \hfill$\Box$\fi}
\title{Derivations and dimensionally nilpotent derivations in Lie triple algebras}
\author{Abdoulaye Dembega\thanks{\texttt{doulaydem@yahoo.fr}}\\
	Universit\'{e} Norbert Zongo\\
	BP 376 Koudougou, Burkina Faso
\and	
	Amidou Konkobo\thanks{\texttt{konkoboa@yahoo.fr}} and Moussa Ouattara\thanks{\texttt{ouatt\_ken@yahoo.fr}}\\
	Universit\'e Joseph KI-ZERBO\\
	03 BP 7021 Ouagadougou 03,
	Burkina Faso
}
\date{\today}
\begin{document}
\maketitle
\begin{abstract}

In this paper, we first study derivations in non nilpotent Lie triple algebras. We determine the structure of derivation algebra according to whether the algebra admits an idempotent or a pseudo-idempotent. We study the multiplicative structure of non nilpotent dimensionally nilpotent Lie triple algebras. We show that when $n=2p+1$   the adapted basis coincides with the canonical basis of the gametic algebra $G(2p+2,2)$ or this one obviously associated to a pseudo-idempotent and if $n=2p$ then the algebra is either one of the precedent case or a conservative Bernstein algebra.

\smallskip
\textbf{Keyword:} Dimensionally nilpotent Lie triple algebra, pseudo-idempotent, Jordan algebra, ascending basis, adapted basis.

\smallskip
\textbf{2010 Mathematics Subject Classification} : Primary 17A30, secondary 17D92, 17B40, 17C10

\end{abstract}
\bigskip\bigskip

\section{Introduction}
A $n+1$ finite dimensional algebra $A$ is dimensionally nilpotent if there is a derivation $d: A\longrightarrow A$ such that $d^{n+1}=0$ and
$d^{n}\neq 0$. This notion has been studied by G.F. Leger and P.L. Manley\cite{Leger} for Lie algebras, J.M. Osborn \cite{Osborn} for Jordan algebras, Micali and Ouattara\cite{Micali} for genetic algebras. Recently, V. Eberlin \cite{Eberlin} has deepened the work of the authors of \cite{Leger} in his thesis. Regarding Jordan algebras, Osborn shows that every dimensionally nilpotent Jordan $K$-algebra is either nilpotent or satisfies
$A/Rad(A)\simeq K$.

We study the case of non nilpotent  dimensionally Lie triple algebras. In an adapted basis we caracterize the multiplicative structure of these algebras relative to the parity of $n$. More precisely we show that when $n=2p+1$, the adapted basis coincides with canonical basis of the gametic algebra $G(2p+2,2)$ or this one obviously associated to a pseudo-idempotent. If $n=2p$ then this algebra is either one of the precedent case or a train algebra of rank $3$ which is a Jordan algebra \cite{Ouat}. Since Jordan algebras are also Lie triple ones the final corollary describes non nilpotent dimensionally nilpotent Jordan algebras.

\section{Preliminaries}

 A \textit{Lie triple algebra} is a commutative algebra satisfying
 \begin{eqnarray}\label{Eq1}
 2x(x(xy)) + yx^3 = 3x(yx^2)
 \end{eqnarray}
while a \textit{Jordan algebra} is a commutative algebra satisfying
\begin{eqnarray*}
x^2(yx)=(x^2y)x.
\end{eqnarray*}

 Every Jordan algebra satisfies identity \eqref{Eq1}.

\begin{theo}[\cite{Hentzel}]
Let $A$ be a Lie triple algebra and  $L$ the ideal generated by the associators
  $(x^2, x, x)$. Then $L^2 = 0$ and $A/L$ is a Jordan algebra.
\end{theo}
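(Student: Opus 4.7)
The plan is to split the statement into two assertions: (i) $(x^2, y, x) \in L$ for all $x, y \in A$, which forces $A/L$ to satisfy the Jordan identity, and (ii) $L \cdot L = 0$.

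For (i), the first step is to rewrite the defining identity \eqref{Eq1} in associator form. Using $(a,b,c) := (ab)c - a(bc)$ together with the commutativity consequence $(a,b,c) = (c,b,a)$, the relation $2x(x(xy)) + y x^3 = 3 x(y x^2)$ converts into a linear equation among $(x^2, x, y)$, $(x, x, xy)$, and $(x, x^2, y)$. The second step is to polarize the generator of $L$: substituting $x \mapsto x + t y$ in $(x^2, x, x)$ and reading off the coefficient of $t$ places a specific combination of $(xy, x, x)$, $(x^2, y, x)$, and $(x^2, x, y)$ inside $L$. Combining the associator form of \eqref{Eq1} with this polarization, and applying the symmetry $(a,b,c) = (c,b,a)$ to exchange outer slots, I expect to solve the resulting small linear system and conclude $(x^2, y, x) \in L$.

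For (ii), the most direct approach is to pair two generators of $L$ and reduce their product to zero using further multilinearizations of \eqref{Eq1}. Alternatively, having established (i), one can work in $A/L$ where the Jordan identities already hold and use standard Jordan-algebra manipulations to propagate the square-zero property down to $L$ itself; this second route is often cleaner because Jordan algebras provide symmetry relations (e.g.\ $[L_a, L_{a^2}] = 0$) that are absent in the bare Lie triple setting.

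The main obstacle is the polarization bookkeeping in (i). Identity \eqref{Eq1} is quartic in $x$, its partial linearizations generate several trilinear relations, and one must choose exactly the right combination — together with repeated use of $(a,b,c) = (c,b,a)$ — to isolate the Jordan associator $(x^2, y, x)$. The coefficients $2$ and $3$ appearing throughout also indicate that the ground field must avoid characteristics $2$ and $3$; this should be tracked carefully. The secondary obstacle, $L^2 = 0$, is typically settled by the same flavor of polarization and should succumb once the right identities from step (i) have been collected.
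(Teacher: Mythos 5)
The paper does not prove this theorem; it is imported verbatim from Hentzel--Peresi \cite{Hentzel}, so there is no internal argument to compare yours against. Judged on its own, your text is a strategy outline rather than a proof: for (i) you ``expect to solve the resulting small linear system'' without ever writing it down, and for (ii) you assert the claim ``should succumb'' to the same methods. Nothing is actually established.

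Beyond incompleteness, there are concrete defects. First, the symmetry you intend to use repeatedly is wrong in sign: in a commutative algebra $(a,b,c)=(ab)c-a(bc)$ satisfies $(a,b,c)=-(c,b,a)$, not $(a,b,c)=(c,b,a)$ (check it on $(x^2,x,x)=x^3x-x^2x^2$ versus $(x,x,x^2)=x^2x^2-xx^3$). Since your whole plan for showing $(x^2,y,x)\in L$ is to combine linearizations of \eqref{Eq1} with this outer-slot exchange and solve a linear system in $(x^2,y,x)$, $(x^2,x,y)$, $(xy,x,x)$, the sign error corrupts every equation before you start. Second, your fallback route for $L^2=0$ --- pass to $A/L$, where the Jordan identity holds, and ``propagate the square-zero property down to $L$'' --- is circular: identities valid in $A/L$ only give information modulo $L$ and can never force a product of two elements \emph{of} $L$ to vanish in $A$. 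Third, $L$ is the \emph{ideal} generated by the elements $(x^2,x,x)$, so $L^2=0$ requires controlling products such as $\bigl(a(x^2,x,x)\bigr)\bigl(b(y^2,y,y)\bigr)$ for arbitrary $a,b\in A$, not merely products of two generators; this is exactly where the Hentzel--Peresi computation does its real work, and your plan does not touch it. You have correctly identified the two sub-claims that need proving, but neither is proved and the announced tools contain an error that would derail the computation.
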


\begin{defi}
A \textit{pseudo-idempotent} of $A$ is a non-zero element $e$ such that there is $t \ne 0$ in $L$ satisfying $e^2 = e + t$ and $et = \frac{1}{2}t$.
\end{defi}

\begin{theo}[\cite{BKO1}]\label{idemp}
Every Lie triple non nilalgebra contains either a non-zero idempotent, or a pseudo-idempotent.
\end{theo}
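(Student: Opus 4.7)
My plan is to descend to the Jordan quotient $A/L$ from Theorem~2.1, extract a non-zero idempotent there, and then carefully lift and correct it using the identity~\eqref{Eq1}.

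First I would show that $A/L$ is non-nil as soon as $A$ is. Because $L^2=0$, any element $x\in A$ whose class is nilpotent modulo $L$ satisfies $x^m\in L$ for some $m$, hence $x^{2m}\in L^2=0$; so a non-nil $A$ forces a non-nil Jordan algebra $A/L$. A non-nil finite-dimensional Jordan algebra (char $\neq 2,3$) contains a non-zero idempotent via the associative commutative subalgebra generated by a non-nilpotent element, so I pick such $\bar e$ and lift it to $e\in A$ with $t:=e^2-e\in L$.

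The core computation is to specialise \eqref{Eq1} at $x=e$, $y=t$. Writing $\alpha$ for left multiplication by $e$ on $L$, and using $L^2=0$ and that $L$ is an ideal, one easily computes $e^2=e+t$, $te^2=te$, $te^3=te$, $e(te^2)=\alpha^2(t)$ and $e(e(et))=\alpha^3(t)$. The identity~\eqref{Eq1} then collapses to
\[
(2\alpha^3-3\alpha^2+\alpha)(t)=0,\qquad\text{i.e.}\qquad \alpha(\alpha-1)(2\alpha-\tfrac12\cdot 2)(t)=0,
\]
so that the factors $\alpha$, $\alpha-1$, $2\alpha-1$ annihilate $t$ jointly. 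Since the roots $0,\frac12,1$ are distinct in the base field, this yields a Peirce-type splitting $t=t_0+t_{1/2}+t_1$ with $\alpha(t_\lambda)=\lambda t_\lambda$.

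The last step is to modify $e$ inside its coset. For any $s\in L$, $(e+s)^2-(e+s)=t+(2\alpha-I)(s)$ (the term $s^2$ vanishes). Choosing $s=t_0-t_1$, the eigenvalue scalings $-1,0,1$ of $2\alpha-I$ on the three Peirce components give $(2\alpha-I)(s)=-t_0-t_1$, so $e':=e+s$ satisfies $e'^{\,2}-e'=t_{1/2}$. If $t_{1/2}=0$ then $e'$ is a non-zero idempotent; otherwise $e'^{\,2}=e'+t_{1/2}$ with $e't_{1/2}=\alpha(t_{1/2})+(t_0-t_1)\cdot t_{1/2}=\tfrac12 t_{1/2}$, since $L^2=0$, exhibiting $e'$ as a pseudo-idempotent.

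The main obstacle I anticipate is the bookkeeping in the third paragraph: one must be scrupulous about which products land in $L$ and which in $L^2=0$, and must record the action of $\alpha$ on $L$ before invoking \eqref{Eq1}; once that is set up cleanly, the Peirce trick in the final step is almost automatic, and relies only on $\operatorname{char}\neq 2,3$ and the distinctness of the three eigenvalues.
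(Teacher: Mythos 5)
The paper does not actually prove this theorem---it is imported from \cite{BKO1}---but your argument is essentially the standard one behind that citation: lift an idempotent of the Jordan quotient $A/L$, extract the relation $(2\alpha^3-3\alpha^2+\alpha)(t)=0$ by specialising \eqref{Eq1} at $x=e$, $y=t$, split $t$ along the distinct eigenvalues $0,\tfrac12,1$, and correct $e$ by $s=t_0-t_1$; I checked that $te^2=te^3=\alpha(t)$, that $(e+s)^2-(e+s)=t+(2\alpha-I)(s)=t_{1/2}$, and that $e't_{1/2}=\tfrac12 t_{1/2}$ via $L^2=0$, and all of it is correct. The only point to tighten is your first paragraph: $A$ itself is not power-associative (that is precisely what $L\neq 0$ measures), so the step ``$x^m\in L$ hence $x^{2m}\in L^2=0$'' tacitly chooses the balanced association $x^m\cdot x^m$ of the $2m$-fold product; you should state which notion of nil element is in force (e.g.\ nilpotency/solvability of the subalgebra generated by $x$), since for principal powers $x^{[k+1]}=x^{[k]}x$ this argument only yields $x^{[k]}\in L$ for large $k$, not $x^{[k]}=0$.
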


\begin{defi}
	An ideal $I$ of an algebra $A$ is said to be  \textit{caracteristic} if $d(I)\subseteq I$ for every derivation $d$ of $A$.
An ideal $I$ of an algebra $A$ is said to be  \textit{d-invariant} if $d(I)\subseteq I$ for a given derivation $d$ of $A$.\end{defi}

\section{Caracterization of derivations}

In this paragraph we study the derivations in Lie triple non nilalgebras . We give a caracterization, distinguishing two cases: with an idempotent or with a pseudo-idempotent.

\subsection{ Lie triple algebras with idempotent}
Relative to the non-zero idempotent $e$, $A$ admits the following Peirce decomposition $A=A_e(1)\oplus A_e(\frac{1}{2})\oplus A_e(0)$. Relations between Peirce components and the products of their elements are ruled by the following lemma:

\begin{lem}[\cite{BKO1}, Lemme 2.2]\label{decomp1}
	Let $A=A_e(1)\oplus A_e(1/2)\oplus A_e(0)$ be the Peirce decomposition of $A$ relative to a non-zero idempotent. Then\\
	\begin{compactenum}[$(i)$]
		\item $A_e(1/2)A_e(1/2)\subseteq A_e(1)+A_e(0)$, $A_e(\la)A_e(\la)\subseteq A_e(\la)$, $A_e(\la)A_e(1/2)\subseteq A_e(1/2)$,\\
		$A_e(\la)A_e(1-\la)=0$, $(\la=0,1)$ ;
		\item $(x_1y_1)a_{1/2} =x_1(y_1a_{1/2})+y_1(x_1a_{1/2})$,\\
		$(x_0y_0)a_{1/2} =x_0(y_0a_{1/2})+y_0(x_0a_{1/2})$ ;
		\item $[x_1(x_{1/2}a_{1/2})]_1=[(x_1x_{1/2})a_{1/2}+(x_1a_{1/2})x_{1/2}]_1$,\\
		$[x_0(x_{1/2}a_{1/2})]_0=[(x_0x_{1/2})a_{1/2}+(x_0a_{1/2})x_{1/2}]_0$ ;
		\item $[(x_1x_{1/2})y_{1/2}]_0=[(x_1y_{1/2})x_{1/2}]_0$,\\
		$[(x_0x_{1/2})y_{1/2}]_1=[(x_0y_{1/2})x_{1/2}]_1$ ;
		\item $x_0(y_1a_{1/2})=y_1(x_0a_{1/2})$ ;
		\item $x_{1/2}(x_{1/2}^2)_1=x_{1/2}(x_{1/2}^2)_0=\frac12 x_{1/2}^3$ ;
		\item $(x_{1/2}y_{1/2})_0z_{1/2} + (y_{1/2}z_{1/2})_0x_{1/2} + (z_{1/2}x_{1/2})_0y_{1/2}\\
		{\qquad\qquad\qquad}= (x_{1/2}y_{1/2})_1z_{1/2} + (y_{1/2}z_{1/2})_1x_{1/2} + (z_{1/2}x_{1/2})_1y_{1/2}$.
	\end{compactenum}
\end{lem}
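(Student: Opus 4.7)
The plan is to derive all seven relations from the Lie triple identity \eqref{Eq1} together with its partial and full linearizations. To establish the decomposition itself, I substitute $x=e$ in \eqref{Eq1}; using $e^2 = e^3 = e$, the identity collapses to the operator equation $L_e(L_e - 1)(2L_e - 1) = 0$ on $A$. Since this annihilator has distinct roots $0$, $\tfrac{1}{2}$, $1$, the operator $L_e$ is diagonalizable and $A$ splits as the direct sum of the corresponding eigenspaces, which are precisely $A_e(0)$, $A_e(1/2)$, $A_e(1)$.

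For items (i)--(v), the strategy is partial linearization of \eqref{Eq1}. Replacing $x$ by $x + sx' + tx''$ (and analogously for $y$) and extracting the coefficients of $s^i t^j$ yields a family of multilinear identities. Substituting $e$ for some of the new variables and Peirce-homogeneous elements for the rest, then projecting onto each of the three Peirce summands, gives the containment relations in (i) and the projection formulas (ii)--(v). For instance, $A_e(\lambda)A_e(1-\lambda) = 0$ with $\lambda \in \{0,1\}$ is an immediate consequence of this specialization; the quadratic absorptions $A_e(\lambda)^2 \subseteq A_e(\lambda)$ and $A_e(\lambda)A_e(1/2) \subseteq A_e(1/2)$ follow analogously, while the more intricate formulas (iii)--(v) require the trilinear form of \eqref{Eq1} and careful separation of Peirce components. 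Items (vi) and (vii) are obtained in the same spirit: (vi) follows from setting $y = e$ and $x = x_{1/2}$ in \eqref{Eq1} and projecting onto $A_e(1)$ and $A_e(0)$ separately, while (vii) comes from the full trilinearization of \eqref{Eq1} in $x$, specialized to three elements of $A_e(1/2)$ and projected onto $A_e(1/2)$; the cyclic symmetry of the two sides then mirrors the two possible Peirce projections of $x_{1/2}y_{1/2}$.

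The main obstacle is purely combinatorial: the partial and full linearizations of \eqref{Eq1} spawn many terms, and one must carefully separate the three Peirce components of each side without sign or index error. With this bookkeeping in hand, however, every step reduces to elementary linear algebra applied to the eigenspace decomposition of $L_e$, so no genuinely new idea beyond linearization plus Peirce projection is needed.
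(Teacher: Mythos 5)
The paper itself offers no proof of this lemma --- it is imported verbatim from \cite{BKO1} (Lemme 2.2) --- so the comparison can only be against the standard argument in that source, and your strategy does match it: substituting $x=e$ in \eqref{Eq1} to get $L_e(L_e-1)(2L_e-1)=0$ and hence the eigenspace splitting is exactly right, and the identities $(i)$--$(vii)$ are indeed obtained by partially and fully linearizing \eqref{Eq1}, specializing some variables to $e$ and the rest to Peirce-homogeneous elements, and projecting. Your treatment of the decomposition is complete, and your substitution for $(vi)$ ($y=e$, $x=x_{1/2}$) does work: it gives $\tfrac32 x_{1/2}^3 = 3x_{1/2}(x_{1/2}^2)_1$ directly once one knows $(i)$.

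That said, as written this is a proof plan rather than a proof: for $(i)$--$(v)$ and $(vii)$, which carry essentially all of the content of the lemma, you assert that the right linearization and the right specialization exist and defer everything to ``careful bookkeeping'' without exhibiting a single one of these computations. Two points deserve explicit attention if you carry this out. First, the derivation of $(i)$ is not a one-shot ``immediate consequence'': simplifying the linearized identity $2[e(x(xy))+x(e(xy))+x(x(ey))]+y[ex^2+2x(xe)]=3e(yx^2)+6x(y(xe))$ for, say, $x\in A_e(1)$, $y\in A_e(0)$ already requires knowing where $x^2$ and $xy$ live, so the containments in $(i)$ must be established in a suitable order (or by arguing componentwise) to avoid circularity. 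Second, $(ii)$--$(v)$ are not symmetric in the roles of the variables, so you must identify precisely which variable of the quadrilinear identity is set to $e$ and which Peirce components survive; this is where sign and index errors actually occur. None of this invalidates the approach --- no step would fail --- but the lemma is only proved once those computations are displayed.
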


Since $A$ is $e$-stable, i.e. $A_e(\la)A_e(1/2)\subseteq A_e(1/2)$ and $[(x_\la x_{1/2})y_{1/2}]_{1-\la}=[(x_\la y_{1/2})x_{1/2}]_{1-\la}$ with $\la=0,1$, calculations on derivations give results similar to \cite[Corollary~2]{BCMO}, precisely.

\begin{theo}\label{Der} Every derivation $d$ of $A$ is determined and only defined by a quadruplet
$(d(e),f_d,g_d,h_d)$ with $f_d \in End_K(A_e(1/2))$, $g_d\in Der_K(A_e(0))$ and $h_d\in Der_K(A_e(1))$  satisfying the following conditions:
\begin{compactenum}[$(i)$]
  \item $d(e)\in A_e(1/2)$ ;
  \item $d(x_1)=h_d(x_1)+2d(e)x_1$ ;
  \item $d(x_{1/2})=f_d(x_{1/2})+2(d(e)x_{1/2})_0-2(d(e)x_{1/2})_1$ ;
  \item $d(x_0)=g_d(x_0)-2d(e)x_0$ ;
   \item $h_d(x_1y_1)=h_d(x_1)y_1+x_1h_d(y_1)$ ;
   \item $g_d(x_0y_0)=g_d(x_0)y_0+x_0g_d(y_0)$ ;
  \item $h_d((x_{1/2}y_{1/2})_1)=[f_d(x_{1/2})y_{1/2}+x_{1/2}f_d(y_{1/2})]_1$ ;
  \item $g_d((x_{1/2}y_{1/2})_0)=[f_d(x_{1/2})y_{1/2}+x_{1/2}f_d(y_{1/2})]_0$ ;
  \item $f_d(x_1x_{1/2})=h_d(x_1)x_{1/2}+x_1f_d(x_{1/2})$ ;
  \item $f_d(x_0x_{1/2})=g_d(x_0)x_{1/2}+x_0f_d(x_{1/2}).$
\end{compactenum}
\end{theo}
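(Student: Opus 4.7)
My plan is to extract the quadruplet $(d(e), f_d, g_d, h_d)$ from an arbitrary derivation, read off the ten conditions from the Leibniz rule, and then verify that the listed conditions are also sufficient.

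For conditions (i)--(iv), I would first apply $d$ to $e^2 = e$, which gives $2 e\, d(e) = d(e)$, hence $e\, d(e) = \frac{1}{2} d(e)$; this places $d(e)$ in $A_e(1/2)$, yielding (i). For $x_\lambda \in A_e(\lambda)$ with $\lambda \in \{0,1\}$, I would apply $d$ to the eigenvalue equation $e x_\lambda = \lambda x_\lambda$, write $d(x_\lambda)$ in Peirce components, and use that $d(e) x_\lambda \in A_e(1/2)$ (by Lemma~\ref{decomp1}(i)) to force the off-diagonal Peirce components of $d(x_\lambda)$ to take precisely the values $\pm 2 d(e) x_\lambda$ stated in (ii) and (iv), defining $h_d(x_1) := (d(x_1))_1$ and $g_d(x_0) := (d(x_0))_0$. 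The case $\lambda = \frac{1}{2}$ is analogous, but here $d(e) x_{1/2} \in A_e(1) + A_e(0)$ distributes across two Peirce pieces, yielding (iii) with $f_d(x_{1/2}) := (d(x_{1/2}))_{1/2}$.

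For conditions (v)--(x), I would expand $d(x_\mu y_\nu) = d(x_\mu) y_\nu + x_\mu d(y_\nu)$ for each type of Peirce pair $(\mu,\nu)$, substitute (ii)--(iv), and project onto every Peirce subspace. The projection onto the component containing $x_\mu y_\nu$ itself produces the desired identity: (v)--(vi) from the $(\lambda,\lambda)$ cases, (vii)--(viii) from the $(\frac{1}{2},\frac{1}{2})$ case, and (ix)--(x) from the $(\lambda,\frac{1}{2})$ cases. The projections onto the \emph{remaining} Peirce subspaces, after substituting $a_{1/2} = d(e)$, reduce to parts (ii), (iii), (iv) of Lemma~\ref{decomp1} and therefore hold automatically; nothing further is needed from them.

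For the converse direction, given a quadruplet satisfying (i) and (v)--(x), I would define $d$ on $A$ by (ii)--(iv) and linearity, then verify $d(xy) = d(x) y + x d(y)$ on each Peirce pair. This is the same projection-by-projection computation as above, now run in the opposite direction: conditions (v)--(x) handle the component along the product's own subspace, while Lemma~\ref{decomp1} handles the cross components. The main obstacle will be the $(\frac{1}{2},\frac{1}{2})$ case, where $d(x_{1/2})$ has three Peirce components and $x_{1/2} y_{1/2}$ has two, so the Leibniz rule generates many cross terms in each component; in particular, closing the $A_e(1/2)$-part of $d(x_{1/2} y_{1/2}) = d(x_{1/2}) y_{1/2} + x_{1/2} d(y_{1/2})$ is where the cyclic identity Lemma~\ref{decomp1}(vii) --- the algebraic shadow of the Lie triple identity \eqref{Eq1} --- is indispensable.
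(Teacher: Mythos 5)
Your proposal is correct and follows exactly the approach the paper intends: the paper omits the proof of this theorem (deferring to \cite{BCMO}), but its full proof of the pseudo-idempotent analogue, Theorem~\ref{Der pseudo}, proceeds by the same steps you describe --- derive $e^2=e$ and $ex_\la=\la x_\la$ to get $(i)$--$(iv)$, then project the Leibniz rule on each Peirce pair, with the cross-components absorbed by Lemma~\ref{decomp1} (including its part $(vii)$ for the $A_e(1/2)$-component of the $(\tfrac12,\tfrac12)$ case, as you note). No gaps worth flagging.
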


\begin{prop}\label{J caract1} Let $A$ be a Lie triple algebra and $A=A_e(1)\oplus A_e(1/2)\oplus A_e(0)$ the Peirce decomposition of  $A$ relative to an idempotent $e\ne 0$.
Subspaces $J_\la=\{x_\la\in A_e(\la) \mid x_\la A_e(1/2)=0\}$ $(\la=0,1)$ and $J=J_0\oplus J_1$ are caracteristic ideals of $A$ and the quotient algebra $A/J$ is a Jordan algebra.
\end{prop}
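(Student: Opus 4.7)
The plan is to prove the three assertions in sequence: $J$ is an ideal, $J$ is stable under every derivation, and $A/J$ is Jordan. Throughout I lean on the Peirce relations of Lemma~\ref{decomp1} and, for the derivation step, on Theorem~\ref{Der}.

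For the ideal part, take $x_\la \in J_\la$ and $a = a_0 + a_{1/2} + a_1 \in A$. By Lemma~\ref{decomp1}(i), $x_\la a_{1-\la} = 0$; by definition of $J_\la$, $x_\la a_{1/2} = 0$; hence $x_\la a = x_\la a_\la \in A_e(\la)$. To see that $x_\la a_\la$ still lies in $J_\la$, apply identity (ii) of the same lemma: for every $y_{1/2} \in A_e(1/2)$,
\[ (x_\la a_\la) y_{1/2} = x_\la(a_\la y_{1/2}) + a_\la(x_\la y_{1/2}) = 0, \]
since $a_\la y_{1/2} \in A_e(1/2)$ is annihilated by $x_\la$ and $x_\la y_{1/2} = 0$. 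Thus $J_\la A \subseteq J_\la$ and $J = J_0 \oplus J_1$ is an ideal.

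For the characteristic property, fix a derivation $d$ and $x_1 \in J_1$. By Theorem~\ref{Der}, $d(x_1) = h_d(x_1) + 2d(e)x_1$; since $d(e) \in A_e(1/2)$ and $x_1 A_e(1/2) = 0$, the second term vanishes, leaving $d(x_1) = h_d(x_1) \in A_e(1)$. Apply $d$ to $x_1 y_{1/2} = 0$ for $y_{1/2} \in A_e(1/2)$: expanding $d(y_{1/2})$ by Theorem~\ref{Der}(iii), each summand of $x_1 d(y_{1/2})$ either vanishes (since $x_1$ annihilates $A_e(1/2)$, or by $A_e(1)A_e(0)=0$) or lies in $A_e(1)$. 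Since $h_d(x_1) y_{1/2} \in A_e(1/2)$, the two Peirce components of the Leibniz identity must vanish separately, forcing $h_d(x_1) y_{1/2} = 0$ and hence $d(x_1) \in J_1$. The case $x_0 \in J_0$ is entirely symmetric, so $d(J) \subseteq J$.

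Finally, by Hentzel's theorem recalled at the start of the preliminaries, $A/L$ is Jordan where $L$ is the ideal generated by $(x^2, x, x)$; consequently $A/J$ is automatically Jordan once $L \subseteq J$. It thus suffices to check that $(x^2, x, x) = (x^2 x)x - x^2 x^2$ belongs to $J$ for every $x \in A$. Writing $x = x_1 + x_{1/2} + x_0$ and expanding the powers in the Peirce decomposition with the help of identities (iii)--(vii) of Lemma~\ref{decomp1}, one sees that the $A_e(1/2)$-component of $(x^2, x, x)$ cancels and that the surviving $A_e(0) \oplus A_e(1)$ part annihilates $A_e(1/2)$: the ``Jordan defect'' in (iii) and (iv) is exactly the Peirce summand complementary to the one those identities control, and identity (vii) (together with (ii)) forces it to kill $A_e(1/2)$, which is the defining condition of $J$. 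This bookkeeping is the only genuine obstacle; the rest of the argument is mechanical.
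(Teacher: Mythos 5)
Your first two steps are sound. The ideal argument (using Lemma~\ref{decomp1}$(i)$--$(ii)$ to get $x_\la a = x_\la a_\la$ and then $(x_\la a_\la)y_{1/2}=0$) is correct and in fact more self-contained than the paper, which simply cites Osborn for this point. The characteristic step is essentially the paper's own argument: the paper likewise applies $d$ to $x_\la a_{1/2}=0$ and isolates $h_d(x_\la)a_{1/2}=0$; it kills the term $x_\la(d(e)a_{1/2})_\la$ via Lemma~\ref{decomp1}$(iii)$ rather than by separating Peirce components as you do, but the two routes are equivalent.

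The gap is in the last step. Your reduction of ``$A/J$ is Jordan'' to ``$L\subseteq J$'' is legitimate (if $L\subseteq J$ then $A/J$ is a quotient of the Jordan algebra $A/L$, hence Jordan), but you then have to prove that every associator $(x^2,x,x)$ lies in $J$ --- i.e.\ that its $A_e(1/2)$-component vanishes and that its $A_e(1)\oplus A_e(0)$-part annihilates $A_e(1/2)$ --- and this you merely assert: ``one sees that \ldots the surviving part annihilates $A_e(1/2)$.'' No computation is displayed, and the claim is far from automatic: identities $(iii)$, $(iv)$ and $(vii)$ of Lemma~\ref{decomp1} control only \emph{one} Peirce component of each triple product, and extracting from them that the complementary components of $(x^2,x,x)$ kill all of $A_e(1/2)$ is precisely the substance of what has to be shown. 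This is exactly the point the paper does not reprove either: it invokes Osborn (\emph{Varieties of algebras}, Proposition~6.7) for the statement that $J$ is an ideal with $A/J$ Jordan. So your strategy is viable, but as written the decisive step is an unproven assertion; you must either carry out the full Peirce expansion of $(x^2,x,x)$ and verify the two required vanishings, or cite the result as the paper does.
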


\begin{proof} Considering $J_\lambda=\ker(S_\lambda)$, with $S_\lambda : A_e(\la) \rightarrow End_K(A_e(1/2)), x_\la \mapsto S_\la(x_\la)$ and $S_\la(x_\la) : a_{1/2}\mapsto x_\la a_{1/2}$. We know by (\cite{Osb}) that $J_\la$ is an ideal of $A_e(\la)$ ($\la=0,1$) and since $A_e(\la)A_e(1/2)\subseteq A_e(1/2)$, then $J=J_1+J_0$ is an ideal of $A$ such that $A/J$ is a Jordan algebra (\cite{O}, Proposition~6.7).
	
Let's consider $d\in Der_K(A)$, $x_{\la}\in J_{\la}(e)$ and $a_{1/2} \in A_e(1/2)$.
We have $0=d(x_{\la}a_{1/2})=x_{\la}d(a_{1/2})+d(x_{\la})a_{1/2}$. But $d(a_{1/2})=f_d(a_{1/2})+2(d(e)a_{1/2})_0-2(d(e)a_{1/2})_1,$ therefore we have  $x_\la d(a_{1/2})=0 $ because
$x_\la(d(e)a_{1/2})_\la=[(x_\la d(e))a_{1/2}+(x_\la a_{1/2})d(e)]_\la$. Hence $d(x_{\la})a_{1/2}=0$ with $\la=0,1.$ But, on the one hand we have $d(x_1)=h_d(x_1)-2d(e)x_1,$ and $0=d(x_1)a_{1/2}= h_d(x_1)a_{1/2}$ and then  $h_d(x_1)\in J_1$, on the other hand we have $d(x_0)=g_d(x_0)-2d(e)x_0$, with $0=d(x_0)a_{1/2}=g_d(x_0)a_{1/2}$ and then  $g_d(x_0)\in J_0$. Hence $d(J_\la)\subseteq J_\la$ and we conclude that $d(J)\subseteq J$.
\end{proof}

\subsection{Lie triple algebras with pseudo-idempotent}

\begin{lem}[\cite{BKO1}, Proposition 4.3]\label{decomp2}
	Let $L=L_e(1)\oplus L_e(1/2)\oplus L_e(0)$ and $A=A_e(1)\oplus A_e(1/2)\oplus A_e(0)$ be the respective Peirce decomposition of $L$ and $A$, relative to the pseudo-idempotent $e$, satisfying $e^2=e+t$ with $t\in L_{1/2}$ fixed. Then
	\begin{compactenum}[$(i)$]
		\item  $A_e(0)L_e(1/2) \subseteq L_e(1/2)$, $A_e(1)L_e(1/2) \subseteq L_e(1/2)$, $A_e(1)L_e(1)\subseteq L_e(1)$, \\
		$A_e(0)L_e(0) \subseteq L_e(0)$, $A_e(0)L_e(1)=A_e(1)L_e(0)=0$, \\
		$A_e(1/2)L_e(0)=A_e(1/2)L_e(1)=A_e(1/2)L_e(1/2)=0$ ;
		\item  $A_e(1)A_e(0)\subseteq L_e(1/2)$, $A_e(0)A_e(1/2)\subseteq A_e(1/2)$, $A_e(1)A_e(1/2)\subseteq A_e(1/2)$,\\
		$A_e(0)A_e(0)\subseteq A_e(0)+L_e(1/2)$, $A_e(1)A_e(1)\subseteq A_e(1)+L_e(1/2)$,\\ $A_e(1/2)A_e(1/2)\subseteq A_e(1)+A_e(0)$ ;
		\item   $(x_0y_0)_{1/2}= 4(x_0t)y_0=4(y_0t)x_0$ ;\\
		$(x_1y_1)_{1/2}= 4(x_1t)y_1=4(y_1t)x_1$ ; \\
		$(x_0y_1)_{1/2}= 4(x_0t)y_1= 4(y_1t)x_0$ ;
		\item $(x_1y_1)a_{1/2} =x_1(y_1a_{1/2})+y_1(x_1a_{1/2})$ ;
		\item $(x_0y_0)a_{1/2} =x_0(y_0a_{1/2})+y_0(x_0a_{1/2})$ ;
		\item $x_0(y_1a_{1/2})=y_1(x_0a_{1/2})$ ;
		\item $[x_0(x_{1/2}a_{1/2})]_0=[(x_0x_{1/2})a_{1/2}+(x_0a_{1/2})x_{1/2}]_0$ ;
		\item $[x_1(x_{1/2}a_{1/2})]_1=[(x_1x_{1/2})a_{1/2}+(x_1a_{1/2})x_{1/2}]_1$ ;
		\item $[(x_0x_{1/2})y_{1/2}]_1=[(x_0y_{1/2})x_{1/2}]_1$ ;\\
		$[(x_1x_{1/2})y_{1/2}]_0=[(x_1y_{1/2})x_{1/2}]_0$ ;
		\item $(x_{1/2}y_{1/2})_0z_{1/2}+(y_{1/2}z_{1/2})_0x_{1/2}+(z_{1/2}x_{1/2})_0y_{1/2}
		\\{\qquad\qquad\qquad}=(x_{1/2}y_{1/2})_1z_{1/2}+(y_{1/2}z_{1/2})_1x_{1/2}+(z_{1/2}x_{1/2})_1y_{1/2}$.
	\end{compactenum}
\end{lem}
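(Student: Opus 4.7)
The plan is to first derive a polynomial identity for the left-multiplication operator $T_e(x)=ex$ and obtain the Peirce decomposition, then work out each product rule using the Jordan structure of $A/L$ together with $L^2=0$. Substituting $x=e$ into \eqref{Eq1} and using $e^2=e+t$, $et=\tfrac12 t$ (so that $e^3 = e + \tfrac32 t$), a short calculation yields
\[
2T_e^3 - 3T_e^2 + T_e \;=\; 3\bigl(T_e - \tfrac12 \mathrm{id}\bigr)T_t.
\]
Since $T_t$ takes values in $L$ and $L^2=0$ forces $T_t|_L=0$, the right-hand side squares to zero, so $Q(T_e)^2=0$ for $Q(X)=X(X-1)(2X-1)$. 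Because $Q$ has three distinct roots $0,\tfrac12,1$, the Chinese Remainder Theorem yields $A = A_e(1)\oplus A_e(1/2)\oplus A_e(0)$ with $A_e(\lambda):=\ker\bigl(T_e-\lambda\,\mathrm{id}\bigr)^2$. On the ideal $L$, the operator $T_e$ acts through the quotient $A/L$, which is Jordan with $\bar e$ idempotent, so the restriction is semisimple and the decomposition restricts to $L=L_e(1)\oplus L_e(1/2)\oplus L_e(0)$; the hypothesis $et=\tfrac12 t$ then places $t\in L_e(1/2)$.

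For the module-type products in (i), since $L$ is a Jordan module over $A/L$ with $L^2=0$, the standard Jordan-module Peirce rules immediately give the containments $A_e(\lambda)L_e(\lambda)\subseteq L_e(\lambda)$ for $\lambda\in\{0,1\}$, the annihilations $A_e(1)L_e(0)=A_e(0)L_e(1)=0$, and $A_e(\lambda)L_e(1/2)\subseteq L_e(1/2)$ for $\lambda\in\{0,1\}$. The delicate claims are the triple annihilations $A_e(1/2)L_e(0)=A_e(1/2)L_e(1)=A_e(1/2)L_e(1/2)=0$, which are strictly stronger than what the Jordan-module structure alone guarantees. I would prove these by plugging an element from $L_e(\mu)$ into the linearized Lie triple identity and exploiting $L^2=0$ to kill the quadratic-in-$L$ contributions; the resulting constraint pins the surviving term to a Peirce component incompatible with its $T_e$-eigenvalue and so forces it to vanish.

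The product rules (ii) and (iii) within $A$ are handled next. Modulo $L$ they reduce to the Jordan Peirce identities in $A/L$, so each product $A_e(\lambda)A_e(\mu)$ lands in its Jordan target plus a correction in $L$, which $T_e$-eigenvalue bookkeeping localizes to $L_e(1/2)$ whenever $\lambda\neq\mu$. The explicit coefficients such as $(x_0y_0)_{1/2}=4(x_0t)y_0$ in (iii) I would extract by substituting the Peirce decomposition directly into \eqref{Eq1} with arguments chosen from $A_e(0)$ (resp.\ $A_e(1)$) and comparing Peirce components on both sides. The remaining items (iv)--(x) are multilinear consequences of \eqref{Eq1}: polarize the triple identity in each slot, project onto a fixed Peirce component using the decomposition established above, and discard any term containing a product in $L\cdot L$.

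The hardest part of the proof will be the zero products in (i) involving $A_e(1/2)$. These go beyond the generic Jordan-module Peirce rules and must genuinely combine the specific form of \eqref{Eq1} with the nilpotence $L^2=0$. Once those are in place, items (ii)--(x) follow from the Jordan structure of $A/L$ together with eigenvalue tracking, so the bulk of the verification reduces to routine linearization and Peirce bookkeeping.
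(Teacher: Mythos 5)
First, a point of reference: the paper itself offers no proof of this lemma --- it is imported verbatim from (\cite{BKO1}, Proposition~4.3) --- so there is no internal argument to compare yours against, and I can only assess the proposal on its own terms. Your opening computation is correct: with $e^3=e+\tfrac32 t$ one does get $2T_e^3-3T_e^2+T_e=3(T_e-\tfrac12\mathrm{id})T_t$, the right-hand side squares to zero because $L$ is an ideal with $L^2=0$, and the primary decomposition into the spaces $\ker(T_e-\lambda\,\mathrm{id})^2$ follows. But this is where the first genuine gap appears. The lemma --- and every subsequent use of it in this paper, e.g.\ ``$ex_1=x_1$'' in the proof of Theorem~\ref{Der pseudo} --- takes $A_e(\lambda)$ to be the honest eigenspace $\{x\in A: ex=\lambda x\}$, not the generalized one. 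Passing from $\ker(T_e-\lambda\,\mathrm{id})^2$ to $\ker(T_e-\lambda\,\mathrm{id})$ is equivalent to showing $(T_e-\tfrac12\mathrm{id})T_t=0$, i.e.\ $tA\subseteq L_e(1/2)$, which is precisely a special case of the statements in $(i)$ that you defer. As written your plan is therefore circular: you need part of $(i)$ to put the decomposition in the form the lemma uses, and you propose to prove $(i)$ using that decomposition. The usual escape is to work first inside the generalized eigenspaces, establish $tA\subseteq L_e(1/2)$ there, and only then upgrade; your write-up does not acknowledge that this ordering is forced.

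Second, the load-bearing steps are announced rather than carried out. The vanishing $A_e(1/2)L_e(0)=A_e(1/2)L_e(1)=A_e(1/2)L_e(1/2)=0$, the fact that the $L$-corrections in $(ii)$ land specifically in $L_e(1/2)$ rather than merely in $L$, and the coefficient $4$ in $(iii)$ constitute essentially all of the content of the proposition, and for each you say only that you ``would'' linearize \eqref{Eq1} and that eigenvalue bookkeeping ``pins'' the surviving term down; that is a strategy statement, not a proof. Relatedly, your appeal to ``standard Jordan-module Peirce rules'' for the remaining containments in $(i)$ is not justified: the fact that $A/L$ is Jordan does not make $L$ a Jordan bimodule over $A/L$ --- the split null extension inherits only the (linearized) Lie triple identity from $A$, so $L$ is a Lie-triple module, and the agreement of its Peirce rules with the Jordan ones for $\lambda,\mu\in\{0,1\}$ is itself something to verify (for instance by applying Lemma~\ref{decomp1} to that split null extension, or by linearizing \eqref{Eq1} directly). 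Indeed the conclusion you are after, namely that $A_e(1/2)$ annihilates all of $L$, already shows $L$ is not behaving like a generic Jordan module. In short: the skeleton and the first operator identity are right, but the decomposition is set up in a form that does not match the statement, and everything that makes this lemma nontrivial is left as intention.
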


\begin{lem}\label{d(t)}
 Let $A$ be a Lie triple algebra and $e$ a pseudo-idempotent of $A$: $e^2=e+t,\; et=\frac12 t,\; t^2=0$ with $t\in L$. For every derivation $d$ of $A$, we have
$$d(t)=0\text{ and } d(e)\in A_e(1/2).$$
\end{lem}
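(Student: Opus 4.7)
The plan is to decompose $d(e)$ according to the Peirce decomposition of $A$ relative to $e$, writing $d(e) = a_1 + a_{1/2} + a_0$ with $a_\lambda \in A_e(\lambda)$, and then to extract information from the two defining identities $e^2 = e + t$ and $et = \tfrac12 t$ of a pseudo-idempotent. The goal is to show that $a_1 = a_0 = 0$ and that $d(t)$ vanishes simultaneously.

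First I would apply $d$ to $e^2 = e + t$. Using $ea_\lambda = \lambda a_\lambda$, one gets $2ed(e) = 2a_1 + a_{1/2}$, and therefore
\[
d(t) = 2ed(e) - d(e) = a_1 - a_0.
\]
So $d(t)$ already has trivial $A_e(1/2)$-component, its $A_e(1)$-component being $a_1$ and its $A_e(0)$-component being $-a_0$. Next I would apply $d$ to $et = \tfrac12 t$, yielding $d(e)t + ed(t) = \tfrac12 d(t)$. Since $t \in L_e(1/2)$, Lemma~\ref{decomp2}(i) gives $a_{1/2}t = 0$, $a_1 t \in L_e(1/2)$ and $a_0 t \in L_e(1/2)$, so $d(e)\,t = a_1 t + a_0 t$ lies entirely in $A_e(1/2)$. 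Also $ed(t) = e(a_1 - a_0) = a_1$. Substituting and moving everything to one side produces
\[
(a_1 t + a_0 t) + \tfrac12 a_1 + \tfrac12 a_0 = 0.
\]

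Now the three summands on the left lie respectively in $A_e(1/2)$, $A_e(1)$ and $A_e(0)$, so by directness of the Peirce decomposition each one must vanish. In particular $a_1 = 0$ and $a_0 = 0$, whence $d(e) = a_{1/2} \in A_e(1/2)$ and $d(t) = a_1 - a_0 = 0$. The only real obstacle is the careful bookkeeping of Peirce components; the decisive input is Lemma~\ref{decomp2}(i), which forces $d(e)\,t$ to sit purely in $A_e(1/2)$, so that the final identity splits cleanly into three independent pieces.
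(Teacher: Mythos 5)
Your argument is correct and is essentially the paper's own proof: both decompose $d(e)$ into Peirce components, differentiate $e^2=e+t$ to get $d(t)=a_1-a_0$, then differentiate $et=\tfrac12 t$ and use that $d(e)t$ lands in $L_e(1/2)\subseteq A_e(1/2)$ (Lemma~\ref{decomp2}$(i)$) to force $a_1=a_0=0$ by directness of the decomposition. No gaps; the bookkeeping checks out.
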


\begin{proof}
Let's consider $d\in Der_K(A)$. Since $e^2=e+t$, we have $2ed(e)=d(e)+d(t)$. Setting $d(e)=[d(e)]_1+[d(e)]_{1/2}+[d(e)]_0$, we have $d(t)=[d(e)]_1-[d(e)]_0$. Because of $2et=t$, we deduce $2ed(t)+2d(e)t=d(t)$. We have $2d(e)t=-[d(e)]_1-[d(e)]_0$. We know that $t\in L_e({1/2})$ and  $L_e({1/2})$ is an ideal of $A$. It follows that $[d(e)]_1=[d(e)]_0=0$.
\end{proof}


\begin{theo}\label{Der pseudo} Every derivation $d$ of $A$ is determined and only defined by a quadruplet $(d(e),f_d,g_d,h_d)$ with $f_d\in End_K(A_e(1/2))$, $g_d\in End_K(A_e(0))$ and $h_d\in End_K(A_e(1))$  satisfying the following conditions:
\begin{compactenum}[$(i)$]
  \item $d(e)\in A_e(1/2)$ ;
  \item $d(x_1)=h_d(x_1)+2d(e)x_1$ ;
  \item $d(x_{1/2})=f_d(x_{1/2})+2(d(e)x_{1/2})_0-2(d(e)x_{1/2})_1$ ;
  \item $d(x_0)=g_d(x_0)-2d(e)x_0$ ;
  \item $h_d((x_1y_1)_1)=[h_d(x_1)y_1+x_1h_d(y_1)]_{1}$ ;\\
  $f_d((x_1y_1)_{1/2})=[h_d(x_1)y_1+x_1h_d(y_1)]_{1/2}=2h_d((x_1y_1)_1)t$ ;
  \item $g_d((x_0y_0)_0)=[g_d(x_0)y_0+x_0g_d(y_0)]_{0}$ ;\\
  $f_d((x_0y_0)_{1/2})=[g_d(x_0)y_0+x_0g_d(y_0)]_{1/2}=2g_d((x_0y_0)_0)t$ ;
  \item $h_d((x_{1/2}y_{1/2})_1)=[f_d(x_{1/2})y_{1/2}+x_{1/2}f_d(y_{1/2})]_1$ ;\\
  		$g_d((x_{1/2}y_{1/2})_0)=[f_d(x_{1/2})y_{1/2}+x_{1/2}f_d(y_{1/2})]_0$ ;
  \item $f_d(x_1x_0)=h_d(x_1)x_0+x_1g_d(x_0)$ ;
  \item $f_d(x_1x_{1/2})=h_d(x_1)x_{1/2}+x_1f_d(x_{1/2})$ ;
  \item $f_d(x_0x_{1/2})=g_d(x_0)x_{1/2}+x_0f_d(x_{1/2}).$
\end{compactenum}
\end{theo}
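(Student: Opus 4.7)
The plan is three-fold: (i) follows directly from Lemma~\ref{d(t)}; conditions (ii)--(iv) are extracted by applying $d$ to the Peirce eigenvalue equations $ex_\lambda=\lambda x_\lambda$; and (v)--(xi) follow by applying the Leibniz rule to each type of Peirce product, substituting (ii)--(iv), and projecting onto Peirce components with the help of the multiplication rules of Lemma~\ref{decomp2}. The converse direction reverses these projections and is mechanical once (v)--(xi) are taken as hypotheses.

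For (ii)--(iv), the plan is to take $x_\lambda\in A_e(\lambda)$ with $\lambda\in\{0,\tfrac12,1\}$ and apply $d$ to $ex_\lambda=\lambda x_\lambda$, producing $d(e)x_\lambda+e\cdot d(x_\lambda)=\lambda d(x_\lambda)$. Decomposing $d(x_\lambda)$ into its three Peirce components and using $d(e)\in A_e(1/2)$ together with Lemma~\ref{decomp2}(ii) (so that $d(e)x_1,d(e)x_0\in A_e(1/2)$ while $d(e)x_{1/2}\in A_e(1)+A_e(0)$), matching Peirce parts in the eigenvalue equation forces the off-diagonal pieces of $d(x_\lambda)$ to equal $\pm 2d(e)x_\lambda$, leaving the diagonal projections $h_d(x_1):=[d(x_1)]_1$, $f_d(x_{1/2}):=[d(x_{1/2})]_{1/2}$, $g_d(x_0):=[d(x_0)]_0$ as the only free data. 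For (v)--(xi), I would then expand $d(x_\lambda y_\mu)=d(x_\lambda)y_\mu+x_\lambda d(y_\mu)$ for each pair $(\lambda,\mu)$, substitute (ii)--(iv), combine the mixed $d(e)$-terms via the symmetric identities (iv)--(ix) of Lemma~\ref{decomp2} (which yield e.g.\ $(d(e)x_1)y_1+x_1(d(e)y_1)=(x_1y_1)d(e)$), eliminate residual $A_e(1/2)L_e(\lambda)$ contributions using Lemma~\ref{decomp2}(i), and project onto the Peirce components prescribed by Lemma~\ref{decomp2}(ii).

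The main obstacle I anticipate is the second equality in each of (v) and (vi), namely $f_d((x_1y_1)_{1/2})=2h_d((x_1y_1)_1)t$ and its $\lambda=0$ analogue. Once the first equality of (v) identifies $f_d((x_1y_1)_{1/2})$ with $[h_d(x_1)y_1+x_1h_d(y_1)]_{1/2}$, the task reduces to proving $[h_d(x_1)y_1+x_1h_d(y_1)]_{1/2}=2h_d((x_1y_1)_1)t$. The plan here is to apply the $t$-twist formula $(uv)_{1/2}=4(ut)v=4(vt)u$ of Lemma~\ref{decomp2}(iii) to each summand, invoke Lemma~\ref{decomp2}(iv) to derive $(uv)t=2(ut)v$ for $u,v\in A_e(1)$ (using $L^2=0$ so that $(uv)_{1/2}\cdot t=0$), and finally use the first equality of (v) to match the result with $2h_d((x_1y_1)_1)t$. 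Tracking the coefficients $2$ and $4$ through this rearrangement, and making repeated use of $d(t)=0$ from Lemma~\ref{d(t)} whenever $t$ crosses past $d$, is the technically delicate step.
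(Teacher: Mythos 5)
Your plan matches the paper's proof in all essentials: $(i)$ comes from Lemma~\ref{d(t)}, $(ii)$--$(iv)$ from applying $d$ to the eigenvalue equations $ex_\lambda=\lambda x_\lambda$ and matching Peirce components, and the remaining conditions from the Leibniz rule combined with the symmetrization and projection identities of Lemma~\ref{decomp2}, with the converse checked mechanically. Your treatment of the delicate equalities $f_d((x_\lambda y_\lambda)_{1/2})=2h_d((x_\lambda y_\lambda)_\lambda)t$ is only a cosmetic rearrangement of the paper's (the paper differentiates $(x_1y_1)_{1/2}=2(x_1y_1)t$ using $d(t)=0$, while you apply the same twist formula to $[h_d(x_1)y_1+x_1h_d(y_1)]_{1/2}$ and invoke the first equality of $(v)$), so the two arguments are equivalent.
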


\begin{proof} Let $d$ be a derivation of $A$ and $e$ a pseudo-idempotent of $A$. Since $d(e)\in A_e(1/2)$, we have $(i)$.
Let $x_1\in A_e(1)$. We have $ex_1=x_1$, and then $d(e)x_1+ed(x_1)=d(x_1)$. Let's set $d(x_1)=a_1+a_{1/2}+a_0$. Then $d(e)x_1+a_1+\frac 12 a_{1/2}=a_1+a_{1/2}+a_0$, and we have $a_{1/2}=2d(e)x_1$ and $a_0=0$. Hence
$d(x_1)=h_d(x_1)+2d(e)x_1$ with $h_d$ an endomorphism of $A_e(1)$ and $(ii)$ is prooved.

By similar calculations we have $(iii)$ and $(iv)$.

with $x_1$, $y_1\in A_e(1)$, we have $(x_1y_1)_{1/2}=4x_1(y_1t)=4y_1(x_1t)=2(x_1y_1)t$
\begin{eqnarray*}
  d(x_1y_1) &=& d[(x_1y_1)_1]+d[(x_1y_1)_{1/2}] =d[(x_1y_1)_1]+2d((x_1y_1)t)\\
   &=&h_d[(x_1y_1)_1]+2d(e)(x_1y_1)+2d((x_1y_1))t+2(x_1y_1)d(t)\\
   &=&h_d[(x_1y_1)_1]+2h_d(x_1y_1)t+2d(e)(x_1y_1),
\end{eqnarray*}
because $d((x_1y_1))t=h_d(x_1y_1)t$.

 We also have,
\begin{eqnarray*}
  d(x_1y_1) &=& d(x_1)y_1+x_1d(y_1)\\
   &=& x_1[h_d(y_1)+2d(e)y_1]+[h_d(x_1)+2d(e)x_1]y_1\\
   &=& h_d(x_1)y_1+x_1h_d(y_1)+2[d(e)y_1]x_1+2[d(e)x_1]y_1.\\
   &=& h_d(x_1)y_1+x_1h_d(y_1)+2d(e)(x_1y_1).
\end{eqnarray*}

It follows that

$$h_d((x_1y_1)_1)=[h_d(x_1)y_1+x_1h_d(y_1)]_{1} \text{ et }$$
$$f_d((x_1y_1)_{1/2})=[h_d(x_1)y_1+x_1h_d(y_1)]_{1/2}=2h_d(x_1y_1)t, \textrm{ and we have } (v).$$

We show by similar calculations that:

$$g_d((x_0y_0)_0)=[g_d(x_0)y_0+x_0g_d(y_0)]_{0}\text{ et } $$
$$f_d((x_0y_0)_{1/2})=[g_d(x_0)y_0+x_0g_d(y_0)]_{1/2}=2g_d(x_0y_0)t, \textrm{and we have } (vi).$$

Let $x_{1/2}, y_{1/2}\in A_e(1/2)$. We have
\begin{eqnarray*}
  d(x_{1/2}y_{1/2}) &=& d((x_{1/2}y_{1/2})_1) +d((x_{1/2}y_{1/2})_0)\\
                    &=& h_d((x_{1/2}y_{1/2})_1)+2d(e)(x_{1/2}y_{1/2})_1\\
                    & &+g_d((x_{1/2}y_{1/2})_0)-2d(e)(x_{1/2}y_{1/2})_0,
  \end{eqnarray*}
  But
  \begin{eqnarray*}
   d(x_{1/2}y_{1/2})&=& d(x_{1/2})y_{1/2}+x_{1/2}d(y_{1/2}) \\
                    &=& (f_d(x_{1/2})+2[d(e)x_{1/2}]_0-2[d(e)x_{1/2}]_1)y_{1/2} \\
                    & & +x_{1/2}(f_d(y_{1/2})+2[d(e)y_{1/2}]_0-2[d(e)y_{1/2}]_1)\\
                    &=&f_d(x_{1/2})y_{1/2}+2y_{1/2}[d(e)x_{1/2}]_0-2y_{1/2}[d(e)x_{1/2}]_1\\
                    & &+x_{1/2}f_d(y_{1/2})+2x_{1/2}[d(e)y_{1/2}]_0-2x_{1/2}[d(e)y_{1/2}]_1\\
                    &=&f_d(x_{1/2})y_{1/2}+x_{1/2}f_d(y_{1/2})+2d(e)(x_{1/2}y_{1/2})_1\\
                    & &-2d(e)(x_{1/2}y_{1/2})_0
   \end{eqnarray*}
because of identity $(x)$ of Lemma~\ref{decomp2}. It follows that:
\begin{eqnarray*}
d(x_{1/2}y_{1/2}) &=&f_d(x_{1/2})y_{1/2}+x_{1/2}f_d(y_{1/2})+2d(e)(x_{1/2}y_{1/2})_1-2d(e)(x_{1/2}y_{1/2})_0\\
\end{eqnarray*}
and we have
$$h_d((x_{1/2}y_{1/2})_1)=[f_d(x_{1/2})y_{1/2}+x_{1/2}f_d(y_{1/2})]_1\text{ et }$$
$$ g_d((x_{1/2}y_{1/2})_0)=[f_d(x_{1/2})y_{1/2}+x_{1/2}f_d(y_{1/2})]_0, \textrm{ and we have } (vii).$$

We have
\begin{eqnarray*}
  d(x_1x_0) &=& f_d((x_1x_0)_{1/2}) \\
   &=& d(x_1)x_0+x_1d(x_0) \\
   &=& [h_d(x_1)+2d(e)x_1]x_0+x_1[g_d(x_0)-2d(e)x_0]\\
   &=& h_d(x_1)x_0+x_1g_d(x_0),
\end{eqnarray*}

$$f_d((x_1x_0)_{1/2})=h_d(x_1)x_0+x_1g_d(x_0), \textrm{ and we have } (viii).$$

In a similar way
\begin{eqnarray*}
  d(x_1x_{1/2}) &=& f_d(x_1x_{1/2})+2[d(e)(x_1x_{1/2})]_0-2[d(e)(x_1x_{1/2})]_1 \\
   &=& d(x_1)x_{1/2}+x_1d(x_{1/2}) \\
   &=& [h_d(x_1)+2d(e)x_1]x_{1/2}+x_1[f_d(x_{1/2})+2[d(e)x_{1/2}]_0-2[d(e)x_{1/2}]_1]\\
   &=& h_d(x_1)x_{1/2}+x_1f_d(x_{1/2})+2[d(e)x_1]x_{1/2}+2x_1[d(e)x_{1/2}]_0-2x_1[d(e)x_{1/2}]_1,
\end{eqnarray*}

$$f_d(x_1x_{1/2})=h_d(x_1)x_{1/2}+x_1f_d(x_{1/2}), \textrm{ and we have } (ix).$$
So we have
\begin{eqnarray*}
  d(x_0x_{1/2}) &=& f_d(x_0x_{1/2}) \\
   &=& d(x_0)x_{1/2}+x_0d(x_{1/2}) \\
   &=& [g_d(x_0)-2d(e)x_0]x_{1/2}+x_0[f_d(x_{1/2})+2[d(e)x_{1/2}]_0-2[d(e)x_{1/2}]_1]\\
   &=& g_d(x_0)x_{1/2}+x_0f_d(x_{1/2})-2[d(e)x_0]x_{1/2}+x_0[2[d(e)x_{1/2}]_0\\
   & &-2[d(e)x_{1/2}]_1],
\end{eqnarray*}
$$f_d(x_0x_{1/2})=g_d(x_0)x_{1/2}+x_0f_d(x_{1/2}), \textrm{ and finally } (x).$$

Conversely, once we have identities $(i)$ to $(xi)$, setting $x=x_1+x_{1/2}+x_0$ and $y=y_1+y_{1/2}+y_0$, we show that $d(xy)=d(x)y+xd(y)$.
\end{proof}

\begin{exe}
Let $A$ be the four dimensional Lie triple $K$-algebra which multiplication table in the basis $\{e,t,u,r\}$ is given by : $e^2=e+t$, $u^2=u+r$, $et=\frac12 t$, $ur=\frac12 r$, all other product being zero.  The Peirce decomposition of $A$ relative to pseudo-idempotent $e$ gives
$A_e(1)=K(e+2t)$, $A_e(1/2)=Kt$, $A_e(0)=<u,r>$. Let $d$ be a derivation of $A$. Since $e$ and $u$ are pseudo-idempotents, we have $d(t)=d(r)=0$, $d(e)=\al t$, $d(u)=\be r$.
 The derivation algebra is two dimensional.
\end{exe}

\begin{exe}
    Let's consider the four dimensional Lie triple $K$-algebra $A$ which multiplication table in the basis $\{e,t_1,t_2,v\}$ is given by : $e^2=e+t_1$, $et_1=\frac12 t_1$, $et_2=\frac12 t_2$, $ev=v$ and $vt_1=t_2$, all other products being zero. Then we have $A_e(1)=<(e+2t), v>$, $A_e(1/2)=<t_1, t_2>$, $A_e(0)=0$. Let $d$ be a derivation of $A$. We have $d(t_1)=0.$ Set $d(e)= \al_1 t_1 +\be_1 t_2.$ Thus $\al_1 t_1 +\be_1 t_2=d(e+2t_1)=h_d(e+2t_1)+2d(e)(e+2t_1)=h_d(e+2t_1)+\al_1 t_1 +\be_1t_2.$ It follows that $h_d(e+2t_1)=0.$

Setting $d(v)=\al_2(e+2t_1)+\be_2 v+\g_2 t_1+\eta_2 t_2$, relation $0=d(v^2)=2d(v)v$ gives $\al_2=\g_2=0.$ It follows that $h_d(v)=\be_2 v.$ Furthermore, relation $f_d((v t_1)_{1/2})=h_d(t_1)+f_d(t_1)$ gives $f_d(t_2)=\be_2 t_2.$

We have $d(e)=\al_1t_1+\be_1 t_2, h_d(e+2t_1)=0, h_d(v)=\be_2 v, f_d(t_1)=0, f_d(t_2)=\be_2 t_2.$ The derivation algebra is three dimensional.
\end{exe}

\begin{prop}
Let's consider a pseudo-idempotent $e\ne0$.
Subspace $J_e(1/2)=\{x_{1/2}\in A_e(1/2) \mid x_{1/2}A_e(1/2)=0\}$ is a caracteristic ideal and $A/J_e(1/2)$ is a Lie triple algebra with $\overline{e}$ as idempotent.
\end{prop}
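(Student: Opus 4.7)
I will establish (in order) that $J_e(1/2)$ is an ideal of $A$, that it is characteristic, and that the quotient is a Lie triple algebra with $\overline{e}$ idempotent. For the ideal property, take $x_{1/2}\in J_e(1/2)$ and $a=a_1+a_{1/2}+a_0\in A$. Since $x_{1/2}a_{1/2}=0$ by hypothesis and $A_e(\la)A_e(1/2)\subseteq A_e(1/2)$ by Lemma~\ref{decomp2}(ii), one has $x_{1/2}a=a_1x_{1/2}+a_0x_{1/2}\in A_e(1/2)$. To check $(x_{1/2}a)b_{1/2}=0$ for every $b_{1/2}\in A_e(1/2)$, each term $(a_\la x_{1/2})b_{1/2}$ sits in $A_e(1)\oplus A_e(0)$, so four Peirce components must be verified. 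The two ``crossed'' components $[(a_1x_{1/2})b_{1/2}]_0$ and $[(a_0x_{1/2})b_{1/2}]_1$ are furnished directly by Lemma~\ref{decomp2}(ix): they equal $[(a_1b_{1/2})x_{1/2}]_0$ and $[(a_0b_{1/2})x_{1/2}]_1$, both zero since $a_\la b_{1/2}\in A_e(1/2)$ is annihilated by $x_{1/2}$. The two ``straight'' components $[(a_1x_{1/2})b_{1/2}]_1$ and $[(a_0x_{1/2})b_{1/2}]_0$ are handled using Lemma~\ref{decomp2}(viii) and (vii) respectively: in each, the left-hand side $[a_\la(x_{1/2}b_{1/2})]_\la$ vanishes because $x_{1/2}b_{1/2}=0$, and the crossed summand on the right vanishes for the same reason as before, forcing the straight component itself to be zero.

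For the characteristic property, let $d\in Der_K(A)$ and $x_{1/2}\in J_e(1/2)$. Theorem~\ref{Der pseudo}(i) places $d(e)\in A_e(1/2)$, and part (iii) gives $d(x_{1/2})=f_d(x_{1/2})+2(d(e)x_{1/2})_0-2(d(e)x_{1/2})_1$; the last two terms vanish because $d(e)\in A_e(1/2)$ is annihilated by $x_{1/2}$, hence $d(x_{1/2})=f_d(x_{1/2})\in A_e(1/2)$. Differentiating the relation $x_{1/2}b_{1/2}=0$ yields $d(x_{1/2})b_{1/2}=-x_{1/2}d(b_{1/2})$. Expanding $d(b_{1/2})$ with (iii) and using $x_{1/2}A_e(1/2)=0$ leaves the right-hand side in $A_e(1/2)$, while the left-hand side lies in $A_e(1)\oplus A_e(0)$; both must therefore be zero, so $d(x_{1/2})\in J_e(1/2)$.

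Finally, $t\in L_e(1/2)$, and Lemma~\ref{decomp2}(i) provides $A_e(1/2)L_e(1/2)=0$, so $t\in J_e(1/2)$. Since the Lie triple identity~\eqref{Eq1} is preserved under ideal quotients, $A/J_e(1/2)$ is again a Lie triple algebra, and the pseudo-idempotent relation $e^2=e+t$ descends to $\overline{e}^2=\overline{e}$. The main obstacle is the four-component verification in the ideal step: only the two crossed components are supplied directly by identity (ix), and the other two require combining (vii)--(viii) with the defining property $x_{1/2}A_e(1/2)=0$ in a second, less obvious way.
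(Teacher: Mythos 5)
Your proof is correct and follows the same overall plan as the paper: establish the ideal property from the Peirce identities of Lemma~\ref{decomp2}, the characteristic property from Theorem~\ref{Der pseudo}, and then observe $t\in L_e(1/2)\subseteq J_e(1/2)$ so that $\overline{e}$ becomes idempotent. Two local points differ. In the ideal step you are actually more explicit than the paper, which simply asserts $[(x_{1/2}y_\la)a_{1/2}]_\mu=[(a_{1/2}y_\la)x_{1/2}]_\mu$ for both components $\mu\in\{0,1\}$, whereas identity $(ix)$ literally supplies only the crossed one; your use of $(vii)$--$(viii)$ together with $x_{1/2}b_{1/2}=0$ to kill the straight components is exactly the missing justification. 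In the characteristic step you take a genuinely different route: the paper shows $x_{1/2}d(a_{1/2})=0$ outright, using identity $(x)$ of Lemma~\ref{decomp2} (with $z_{1/2}=x_{1/2}$, $y_{1/2}=a_{1/2}$, the third slot $d(e)$) to get $x_{1/2}(d(e)a_{1/2})_1=x_{1/2}(d(e)a_{1/2})_0$, while you only observe that $x_{1/2}d(b_{1/2})$ lies in $A_e(1/2)$ and $d(x_{1/2})b_{1/2}$ lies in $A_e(1)\oplus A_e(0)$, so directness of the Peirce sum forces both to vanish. Your grading argument is cleaner and avoids identity $(x)$ altogether; the paper's version gives the slightly stronger conclusion that each side is zero for an identified algebraic reason. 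Both are valid.
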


\begin{proof} Let $x_{1/2}\in J_e(1/2)$, $a_{1/2} \in A_e(1/2)$ and $y_\la\in A_e(\la)$ ($\la=0,1$). We have $[(x_{1/2}y_\la)a_{1/2}]_\la=[(a_{1/2}y_\la)x_{1/2}]_\la=0$ and $[(x_{1/2}y_\la)a_{1/2}]_{1-\la}=[(a_{1/2}y_\la)x_{1/2}]_{1-\la}=0$, and then $(x_{1/2}y_\la)a_{1/2}=0$. Hence $A_e(\la) J_e(1/2)\subseteq J_e(1/2)$, and it follows that $A J_e(1/2)\subseteq J_e(1/2)$. $J_e(1/2)$ is an ideal of $A$. Since $t\in L_{1/2}\subseteq J_e(1/2)$, $\overline{e}$ is an idempotent of quotient algebra $A/J_e(1/2)$.
	
Let's consider now $d\in Der_K(A)$, $x_{1/2}\in J_e(1/2)$ and $a_{1/2} \in A_e(1/2)$.
We have $0=d(x_{1/2}a_{1/2})=x_{1/2}d(a_{1/2})+d(x_{1/2})a_{1/2}$. But $d(x_{1/2})=f_d(x_{1/2})\in A_{1/2}$ and $d(a_{1/2})=f_d(a_{1/2})+2(d(e)a_{1/2})_0-2(d(e)a_{1/2})_1,$ it follows that  $x_{1/2} d(a_{1/2})=0$ because $x_{1/2}(d(e)a_{1/2})_1=x_{1/2}(d(e)a_{1/2})_0$. So $d(x_{1/2})a_{1/2}=0$, and $d(x_{1/2})\in J_e(1/2)$. We conclude that $d(J_e(1/2))\subseteq J_e(1/2)$.
\end{proof}

\section{Dimensionally nilpotent Lie algebras}

\begin{defi}
Let $A$ be a $n+1$ finite dimensional $K$-algebra . If there is a nilpotent $K$-derivation $d$ of $A$ such that $d^{n+1}=0$ and $d^{n}\ne 0$, $d$ is said to be dimensionally nilpotent, and so is the algebra $A$, though $A$ is not necessarily nilpotent.

If so it is, there is a basis $\{e_0, e_1,\ldots, e_n\}$ of $A$ such that $d(e_i)= e_{i+1}$
$(i=0,\ldots n-1)$ and $d(e_n)= 0$ and the basis $\{e_0, e_1,\ldots, e_n\}$ is said to be \textit{adapted} to $d$.
\end{defi}

\begin{exe}\cite[Exemple~2.5]{Micali}\label{gametique}
Let $K$ be a commutative field of characteristic $\ne2$
and $A = G(n + 1,2)$ the gametic diploid algebra  with $n+ 1$
alleles. Its  multiplication table in the natural basis $\{a_0,\ldots,a_n\}$ is given by $a_ia_j=\frac12 a_i+\frac12 a_j$. We know that the mapping $\w:A\rightarrow K, a_i\mapsto 1$ is a weight function and if we set $e_i=a_0-a_i$ $(i\ne0)$ then $\{e_1,\ldots, e_n\}$ is a basis of the ideal $N = \ker\w$ and $e_0=a_0$ is an idempotent of $A$ such that $\{e_0, e_1,\ldots, e_n\}$ is the canonical basis of $A$, so $e_0e_i =\frac12 e_i$ $(i = l,\ldots,n)$ and if $d$ is a derivation of $A$, $e_0d(e_i) = \frac12 d(e_i)$ $(i = 1,\ldots,n)$, because $d(e_0) \in N$ and $N$ is a zero algebra. So we just need to define $d : A \rightarrow A$ by $d(e_i) = e_{i+1}$ $(i = 1,\ldots,n-1)$, $d(e_0) = e_1$ and $d(e_n) = 0$. It follows that $d^{n+1} = 0$ and $d^n\ne 0$, showing the gametic algebra $A =G(n + 1,2)$ is dimensionally nilpotent.
\end{exe}

\begin{exe}
	Let $K$ be a commutative field of characteristic $\ne 2$ and $A$ the $n+1$ dimensional commutative $K$-algebra, which multiplication table in the basis $\{e_0,e_1,\ldots,e_n\}$ is given by $e_0e_i=\frac12 e_i$ $(i=1,\ldots,n)$, $e_0^2=e_0+e_n$, all other product being zero. If $d$ is a derivation of $A$, $e_0d(e_i)=\frac12 d(e_i)$ $(i=1,\ldots,n)$ because $d(e_0)\in N=<e_1,\ldots,e_n>$ and $N$ is a zero algebra. Here, we just need again to define $d:A\rightarrow A$ by $d(e_i)=e_{i+1}$ $(i=1,\ldots,n-1)$, $d(e_0) = e_1$ and $d(e_n) = 0$. We have $d^{n+1} = 0$ and $d^n\ne 0$, that shows the algebra $A$ is dimensionally nilpotent. Since $Ke_n$ is an ideal, the quotient algebra $A/Ke_n$ is isomorphic to $G(n,2)$.
\end{exe}

\subsection{Basic tools}

\begin{theo}[\cite{Osborn}]\label{osb}
Let $K$ be a perfect field of characteristic $\ne2$ and $3$ and $A$ finite dimensional $K$-Jordan algebra, dimensionally nilpotent. Then either $A$ is nilpotent or $\dim_K(A/rad(A))= 1$.
\end{theo}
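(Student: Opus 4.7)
The plan is to pass to the semisimple quotient $\bar A := A/\mathrm{rad}(A)$ and show that if $A$ is not nilpotent, then necessarily $\dim_K \bar A = 1$. Since $\mathrm{rad}(A)$ is a characteristic ideal of a finite-dimensional Jordan algebra, $d$ stabilizes it and induces a derivation $\bar d$ on $\bar A$. In the adapted basis $\{e_0,\ldots,e_n\}$ the operator $d$ is a single Jordan block, whose invariant subspaces are totally ordered by inclusion; consequently both $d|_{\mathrm{rad}(A)}$ and $\bar d$ are again single Jordan blocks, so $(\bar A, \bar d)$ is itself dimensionally nilpotent of size $r := \dim_K \bar A$.

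Next I would exploit the structure theory of semisimple Jordan algebras: since $K$ is perfect of characteristic $\ne 2,3$, the algebra $\bar A$ has an identity $\bar u$ and decomposes as a direct sum of simple ideals $\bar A = B_1 \oplus \cdots \oplus B_s$ with identities $u_1,\ldots,u_s$ summing to $\bar u$. Every derivation fixes the identity, so $\bar d(\bar u) = 0$. Each $u_i$ is a central idempotent whose Peirce $\tfrac{1}{2}$-component vanishes, and the relation $u_i\,\bar d(u_i) = \tfrac{1}{2}\bar d(u_i)$ coming from differentiating $u_i^2 = u_i$ then forces $\bar d(u_i) = 0$, so $\bar d(B_i) \subseteq B_i$ for every $i$. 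But a single Jordan block cannot split its underlying space as a nontrivial direct sum of invariant subspaces, so necessarily $s = 1$ and $\bar A$ is simple.

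It remains to show that a simple Jordan algebra $\bar A$ carrying a dimensionally nilpotent derivation has dimension $1$. Note that $\bar u$ spans $\ker \bar d$, the unique $1$-dimensional invariant subspace of a single Jordan block. If $\bar u$ is primitive in $\bar A$, then $\bar A$ is a Jordan division algebra, which over a perfect field is controlled by a finite separable field extension for which $\mathrm{Der}_K(\bar A) = 0$, forcing $\bar d = 0$ and $r = 1$. Otherwise write $\bar u = v_1 + \cdots + v_m$ with $m \ge 2$ orthogonal primitive idempotents; by the analysis of Theorem~\ref{Der} each $\bar d(v_i)$ lies in the corresponding $\tfrac{1}{2}$-Peirce component and $\sum \bar d(v_i) = 0$, and iterating $\bar d$ produces a proper nonzero $\bar d$-invariant subspace incompatible with the single Jordan block structure of $\bar d$. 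The main obstacle is this last step, where the interplay between the cyclic shape of $\bar d$, the primitive Peirce decomposition, and the Jordan identity must be unwound carefully; I expect a case analysis by the size $m$ of the primitive decomposition to be the cleanest route.
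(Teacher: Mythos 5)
The first thing to say is that the paper does not prove this statement at all: it is quoted from Osborn's article \emph{Dimensionally nilpotent Jordan algebras} and used as a black box, so there is no internal proof to compare yours against; your attempt has to stand on its own. It does not, because of a fatal gap at the very first step. You assert that $\mathrm{rad}(A)$ is a characteristic ideal of a finite-dimensional Jordan algebra, so that $d$ descends to a derivation $\bar d$ of $\bar A=A/\mathrm{rad}(A)$. That is true in characteristic $0$ but false over a perfect field of characteristic $p\ge 5$, which the hypotheses explicitly allow. Take $A=K[x]/(x^p)$, a commutative associative (hence Jordan) algebra, with $d=d/dx$; this is a well-defined derivation since $d(x^p)=px^{p-1}=0$, and it satisfies $d^{p-1}(x^{p-1})=(p-1)!\ne 0$ and $d^{p}=0$, so $(A,d)$ is dimensionally nilpotent of dimension $p$. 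Yet $d(x)=1\notin\mathrm{rad}(A)=(x)$, so the radical is not $d$-invariant and the entire reduction to the semisimple quotient collapses --- precisely in the characteristic-$p$ case that makes Osborn's theorem nontrivial. (This is why Osborn, like the present paper in the Lie triple setting, works directly with the multiplication constants in an adapted basis rather than passing to $A/\mathrm{rad}(A)$.)

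Two further points. Your last step is a plan rather than a proof (``I expect a case analysis\dots to be the cleanest route''), and the one case you do settle is settled incorrectly: a simple Jordan algebra whose identity is a primitive idempotent is a Jordan division algebra, which over a perfect field need not be a separable field extension and need not have trivial derivation algebra --- $\mathbb{H}^{+}$ over $\mathbb{R}$ is a $4$-dimensional Jordan division algebra with derivation algebra isomorphic to $\mathfrak{so}(3)$. On the positive side, in characteristic $0$ your steps 1--3 are sound and the final step can be closed far more cheaply than you propose: if $r=\dim_K\bar A\ge 2$, then the identity $\bar u$ spans $\ker\bar d=\mathrm{im}\,\bar d^{\,r-1}$, so $\bar u=\bar d(w)$ for some $w$; but $L_{\bar d(w)}=[\bar d,L_w]$ has trace $0$, while $L_{\bar u}$ is the identity operator with trace $r\ne 0$. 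So your route does yield the theorem in characteristic $0$, but not under the stated hypotheses, and no local repair of step 1 will make it work in characteristic $p$.
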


\begin{rem}
Let $A$ be a dimensionally nilpotent Lie triple non nilalgebra. Because of Theorem~\ref{idemp} we consider two cases :
\begin{itemize}
	\item[$1)$] $A$ has an idempotent $e$. Since the ideal $J$ is caracteristic, the quotient algebra $\overline{A}=A/J$ is a dimensionally nilpotent Jordan algebra. Because of Theorem~\ref{osb} we have $\dim_K(\overline{A}/rad(\overline{A})=1$ and since $rad(\overline{A})\simeq rad(A)/J$, the first isomorphism theorem gives $A/rad(A)\simeq \overline{A}/rad(\overline{A})$ and $\dim_KA/rad(A)=1$. Then we can write $A=Ke\oplus N$, with $N=rad(A)$.
	\item[$2)$] $A$ has a pseudo-idempotent $e$. Since the ideal $J_e(1/2)$ is caracteristic, the quotient algebra $\overline{A}=A/J_e(1/2)$ is a dimensionally Lie triple algebra with $\overline{e}$ as idempotent. Because of $1)$ we can write $\overline{A}=K\overline{e}\oplus \overline{N}$ with $\overline{N}=rad(\overline{A})$. So we have $A=Ke\oplus N$ with $N=rad(A)$.
\end{itemize}
\end{rem}

\begin{lem}\label{lem1}
	Let $x$, $y \in N$ such that $x\neq 0$ and $\al \in K$. If $xy=\al y$ then $\al=0$ or $y=0$.
\end{lem}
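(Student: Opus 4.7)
The plan is to iterate left multiplication by $x$ on the given eigenvalue equation and then exploit the nilpotency of $N$. Writing $L_x : A \to A$ for the linear map $z \mapsto xz$, the hypothesis reads $L_x(y) = \alpha y$, and a trivial induction on $k$ yields $L_x^k(y) = \alpha^k y$ for every $k \geq 1$: no associativity or further algebra structure is required, only linearity of $L_x$, since $L_x^k(y) = L_x(L_x^{k-1}(y)) = L_x(\alpha^{k-1} y) = \alpha^{k-1} L_x(y) = \alpha^k y$.

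Next I would argue that $L_x^k(y)$ vanishes for $k$ sufficiently large. The vector $L_x^k(y) = x(x(\cdots(xy)\cdots))$ is a specific, left-iterated product of $k+1$ elements of $N$, hence lies in the subspace of $A$ spanned by all $(k+1)$-fold products of elements of $N$. But $N = \mathrm{rad}(A)$ is nilpotent: by Hentzel's theorem (Section~2) the quotient $A/L$ is a Jordan algebra with $L^2 = 0$, classical Jordan radical theory applied to $A/L$ gives that $\mathrm{rad}(A/L) = N/L$ is nilpotent, and lifting this through $L^2 = 0$ furnishes an integer $m$ such that every $m$-fold product of elements of $N$, in any parenthesization, vanishes. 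Taking $k = m-1$ in the identity above, we obtain $0 = L_x^{m-1}(y) = \alpha^{m-1} y$, and since $y \neq 0$ we conclude $\alpha = 0$.

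The only non-trivial point in this scheme is the second step, namely that $N$ is nilpotent in the strong sense that sufficiently long parenthesized products of its elements vanish; this is where the Lie triple structure of $A$ actually intervenes, through Hentzel's theorem and the relation $L^2 = 0$. Once that is granted, the content of the lemma reduces to the familiar fact that a nilpotent operator (here $L_x$ restricted to $N$) has no nonzero eigenvalues, and the eigenvalue iteration of the first paragraph makes this precise. Note incidentally that the hypothesis $x \neq 0$ is not actually used in the argument: if $x = 0$ then $\alpha y = xy = 0$ already forces $\alpha = 0$ or $y = 0$, so the lemma reduces to the nontrivial case $x \neq 0$ covered above.
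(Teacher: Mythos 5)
Your proof is correct and follows the same route as the paper: iterate $L_x$ to get $L_x^k(y)=\al^k y$ and invoke the nilpotency of $N=\mathrm{rad}(A)$ to force $\al^k y=0$ for large $k$. The paper states this in two lines, taking the nilpotency of $N$ for granted, whereas you additionally spell out why $N$ is nilpotent via Hentzel's theorem and $L^2=0$; the substance is identical.
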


\begin{proof}
	Since $N$ is nilpotent, there is $m\in \N^{*}$ such that $L_x^m(y)=\al^m y=0$, $L_x$ being the multiplicative operator by $x$. Then $\al=0$ or $y=0$.
\end{proof}

From now on, throughout the paper, $A$ is a dimensionally nilpotent Lie triple non nilalgebra of dimension $n+1$, with  $\{e_0, e_1,\ldots, e_n\}$ an adapted basis to the derivation $d$. We can consider $e_0$ either, as an idempotent, or a pseudo-idempotent. In the last case, $e_0^2=e_0+t$, $e_0t=\frac{1}{2}t$ and $t^2=0$ implies $d(t)=0$ (Lemma~\ref{d(t)}), that means $t=\al e_n$ with $\al\in K$.
Since $t\in A_e(1/2)$, if $\al\ne0$, then $e_n\in A_e(1/2)$.

\begin{lem}\label{lem2}
We have:

$(i)$ $e_0e_n=\la_n e_n$

$(ii)$ $e_ke_n=0$ with $1\leq k \leq n$
\end{lem}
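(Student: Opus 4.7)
The plan is to exploit the fact that, in the adapted basis, the matrix of $d$ is a single nilpotent Jordan block (sending $e_i\mapsto e_{i+1}$ for $i<n$ and $e_n\mapsto 0$); consequently $d$ has rank $n$ and $\ker d = Ke_n$. With this in hand, I will prove $(ii)$ first by descending induction on $k$, using Lemma~\ref{lem1} at each step, and then $(i)$ will drop out as a one-line consequence.

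For the base case of the induction I would establish $e_n^2=0$. Since $d(e_n)=0$, applying the derivation rule gives $d(e_n^2)=2e_n d(e_n)=0$, so $e_n^2\in\ker d = Ke_n$. Writing $e_n^2=c\,e_n$ with $c\in K$ and invoking Lemma~\ref{lem1} with $x=y=e_n\in N$ (and $e_n\neq 0$, since the basis is adapted), we conclude $c=0$. For the inductive step, suppose $e_{k+1}e_n=0$ for some $1\le k<n$. Then
\[
d(e_k e_n) \;=\; d(e_k)e_n + e_k d(e_n) \;=\; e_{k+1}e_n + 0 \;=\; 0,
\]
so $e_k e_n\in\ker d = Ke_n$, say $e_k e_n = c_k e_n$. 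Applying Lemma~\ref{lem1} once more with $x=e_k\in N$ and $y=e_n\neq 0$ gives $c_k=0$, completing $(ii)$.

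Part $(i)$ now follows: by the case $k=1$ of $(ii)$ we have $e_1 e_n = 0$, so
\[
d(e_0 e_n) \;=\; d(e_0)e_n + e_0 d(e_n) \;=\; e_1 e_n + 0 \;=\; 0,
\]
whence $e_0 e_n \in \ker d = Ke_n$, i.e.\ $e_0 e_n = \lambda_n e_n$ for some $\lambda_n\in K$. Note that Lemma~\ref{lem1} does \emph{not} apply at this final step because $e_0\notin N$, and so $\lambda_n$ is genuinely allowed to be nonzero (as it will in general be, compatibly with $e_0$ being an idempotent or pseudo-idempotent). The only real obstacle is organisational: although the assertions are listed as $(i)$ then $(ii)$, the argument must be run in the opposite order, since the derivation computation for $(i)$ relies on the $k=1$ case of $(ii)$.
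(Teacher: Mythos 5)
Your proof is correct and follows essentially the same route as the paper's: the paper writes $e_0e_n=\sum_{i}\la_ie_i$, applies $d$ repeatedly to obtain $e_ke_n=\sum_{i\le n-k}\la_ie_{i+k}$, and kills the coefficients one at a time via Lemma~\ref{lem1} starting from $e_n^2=\la_0e_n$ --- which is exactly your descending induction, merely phrased through the coefficients of $e_0e_n$ instead of through $\ker d=Ke_n$. The reordering you flag (proving the $k=n,\dots,1$ cases before concluding $e_0e_n=\la_ne_n$) is likewise how the paper actually proceeds, and both arguments rely on the same implicit fact that $e_1,\dots,e_n\in N$ so that Lemma~\ref{lem1} applies.
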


\begin{proof}
Let's write $e_0e_n=\sum^n_{i=0} \la_i e_i$. Deriving $k$ times successively, we have $e_ke_n=\sum^{n-k}_{i=0} \la_i e_{i+k}$. With $k=n$, it follows that $e_n^2=\la_0 e_n$ and because of Lemma~\ref{lem1} we have $\la_0=0$. Set $k=n-1$, one has $e_{n-1}e_n=\la_1e_n$. That implies $\la_1=0$. And so on, we have $\la_0=\la_1=\cdots=\la_{n-1}=0$, $e_0e_n=\la_ne_n$. Deriving successively $e_0e_n$ it follows that $e_ke_n=0$ with $1\leq k \leq n$.
\end{proof}

\begin{lem}\label{lem3}
We have :
\begin{itemize}
	\item[$(i)$] $e_0e_k=\la_ke_k+\sum^n_{i=k+1} a_{k,i} e_i$ with $1\leq k \leq n-1$;
\item[$(ii)$] $e_ie_k=\sum^n_{j=k+1} \g_{ikj} e_j$ with $1\leq i\leq  k\leq  n-1$;
\item[$(iii)$] $\la_k \in \{0, \frac{1}{2}, 1\}$ with $1\leq k \leq n$.
\end{itemize}
\end{lem}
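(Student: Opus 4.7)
My plan is a strong induction on $n$ (the integer with $d^{n+1}=0$ and $d^n\ne 0$), equivalently on $\dim A = n+1$. The crucial reduction is to the quotient $\bar A = A/Ke_n$: by Lemma~\ref{lem2} the subspace $Ke_n$ is a two-sided ideal, and $d(e_n)=0$ makes it $d$-stable, so $\bar d$ is well-defined as a derivation of $\bar A$. In the pseudo-idempotent case Lemma~\ref{d(t)} gives $d(t)=0$, forcing $t\in\ker d = Ke_n$; in the idempotent case there is nothing to reduce. Thus $\bar e_0^2 = \bar e_0 + \bar t = \bar e_0$ in both cases, so $\bar e_0$ is an honest idempotent of $\bar A$, and $\bar A$ is a non-nilpotent dimensionally nilpotent Lie triple algebra of dimension $n$ with adapted basis $\bar e_0,\ldots,\bar e_{n-1}$; the inductive hypothesis applies.

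The base case $n=1$ is vacuous for $(i)$ and $(ii)$; for $(iii)$, the relation $e_0 e_1 = \la_1 e_1$ from Lemma~\ref{lem2} exhibits $e_1$ as an eigenvector of $L_{e_0}$, and the Peirce decomposition (Lemma~\ref{decomp1} or \ref{decomp2}) forces the spectrum of $L_{e_0}$ into $\{0,\frac{1}{2},1\}$, giving $\la_1 \in \{0,\frac{1}{2},1\}$.

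For the inductive step with $n\ge 2$, the induction hypothesis delivers $(i)$--$(iii)$ in $\bar A$ for every $1\le k\le n-2$. Lifting through the exact sequence $0\to Ke_n\to A\to\bar A\to 0$ turns the identity $\bar e_0\bar e_k = \bar\la_k\bar e_k + \sum_{k<i\le n-1}\bar a_{k,i}\bar e_i$ into $e_0 e_k = \la_k e_k + \sum_{k<i\le n-1} a_{k,i} e_i + a_{k,n} e_n$, which is $(i)$; $(ii)$ lifts analogously and $(iii)$ for $1\le k\le n-1$ follows from $\la_k = \bar\la_k$. The remaining index $k=n-1$ is obtained by applying Lemma~\ref{lem2} directly inside $\bar A$, where $\bar e_{n-1}$ is the top element of the adapted basis: the identities $\bar e_0\bar e_{n-1} = \bar\la_{n-1}\bar e_{n-1}$ and $\bar e_i\bar e_{n-1} = 0$ for $1\le i\le n-1$ lift to $(i)$ and $(ii)$ at $k=n-1$ in $A$. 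Finally $(iii)$ at $k=n$ is immediate from $e_0 e_n = \la_n e_n$ and the Peirce spectrum of $L_{e_0}$ in $A$.

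The main delicate point, requiring explicit verification before the induction can be invoked, is that $\bar A$ genuinely inherits every hypothesis of the statement---especially non-nilpotence, which rests on the key observation $t\in Ke_n$ that turns the pseudo-idempotent $e_0$ into an ordinary idempotent $\bar e_0$ in the quotient; the Lie triple identity and the dimensional nilpotence of $\bar d$ are trivial to check. Once this reduction is secured, the rest of the argument is essentially bookkeeping along the quotient map.
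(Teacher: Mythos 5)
Your proof is correct and follows essentially the same route as the paper: induction on $n$ via the $d$-stable ideal $Ke_n$ furnished by Lemma~\ref{lem2}, lifting the structure constants from the quotient, with the eigenvalue constraint coming from the Peirce decomposition of $L_{e_0}$. The only cosmetic difference is that the paper obtains $(iii)$ for all $k$ at once by applying the polynomial identity $2L_{e_0}^3-3L_{e_0}^2+L_{e_0}=0$ to $e_k$ and reading off the triangular diagonal entry $2\la_k^3-3\la_k^2+\la_k=0$, whereas you derive it inductively and use the spectrum argument only at the top index; both rest on the same fact.
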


\begin{proof}
Reason by recurrence on $n$. With $n=1$ the multiplication table of the algebra $A$ is given by $e_0^2=e_0$, $e_0e_1=\frac12 e_1$, $e_1^2=0$  and the lemma is satisfied. Assume the lemma is true until an order $n$. Because of Lemma~\ref{lem2} the subspace $I_{n+1}=Ke_{n+1}$ is a $d$-invariant ideal of $A$. The quotient algebra $A/I_{n+1}$ is dimensionally nilpotent of dimension $n+1$. By the hypothesis, we have $\overline{e}_0\overline{e}_k=\la_k \overline{e}_k+\sum_{i=k+1}^na_{k,i}\overline{e}_i$ and $\overline{e}_i\overline{e}_k=\sum_{j=k+1}^n\g_{ikj}\overline{e}_j$, with $1\leq i\leq k\leq n$. Otherwise $e_0e_k=\la_k e_k+\sum_{i=k+1}^na_{k,i}e_i+a_{k,n+1}e_{n+1}$ and $e_ie_k=\sum_{j=k+1}^n \g_{ikj}e_j+\g_{ik,n+1}e_{n+1}$;
 and results $(i)$ and $(ii)$ follow.

Now we just need to show $(iii)$. Since $2L_{e_0}^3-3L_{e_0}^2+L_{e_0}=0$, with $L_{e_0}$ being the multiplicative operator by $e_0$, applying it to $e_k$ we have $2\la_k^3-3\la_k^2+\la_k=0$, soit $\la_k \in \{0, \frac{1}{2}, 1\}$.
\end{proof}


\subsection{Example of low dimensions}\label{Sect41}
Here we deal with cases $1\leq n \leq 4$. Let $A$ be a dimensionally nilpotent Lie triple algebra, of dimension $n+1$ and $\{e_0, e_1,\ldots, e_n\}$ be a basis adapted to $d$. We have
$\ker d= Ke_n$. Since $e_0$ is an idempotent or a pseudo-idempotent, $e_1=d(e_0)\in A_e(1/2)$, i.e $e_0e_1=\frac{1}{2}e_1$. Deriving this we have
$e_0e_2+e_1^2=\frac{1}{2}e_2$, that means
\begin{eqnarray}\label{Eq2}
\la_2+\g_{112}=\frac{1}{2} \textrm{ et } a_{2,k}+\g_{11k}=0 \quad (3\leq k\leq n).
\end{eqnarray}
We also have $e_1^2=\sum_{k=2}^{n}\g_{11k}e_k$ which derivative is $2 e_1e_2=\sum_{k=2}^{n-1}\g_{11k}e_{k+1}=\sum_{k=3}^{n}\g_{11,k-1}e_{k}$, that means
$2\g_{12k}=\g_{11,k-1}$ $(3\leq k \leq n)$. Let's derive for the second time $e_0e_1=\frac{1}{2}e_1$. We have $e_0e_3+3e_1e_2=\frac{1}{2}e_3$, that means
\begin{eqnarray}\label{Eq3}
\la_3+3\g_{123}=\frac{1}{2} \textrm{ et } a_{3,k}+3\g_{1,2,k}=0\quad (4\leq k\leq n).
\end{eqnarray}
However we have, $d(e_0e_2)=e_0e_3+e_1e_2=\la_2e_3+\sum_4^n a_{2,k-1}e_k$, that implies

\begin{eqnarray}\label{Eq4}
\la_3+\g_{123}=\la_2 \textrm{ et } a_{3,k}+\g_{12k}=a_{2,k-1}\quad (4\leq k\leq n).
\end{eqnarray}

So $(\ref{Eq4})$ implies $\g_{123}\in \{-1,-\frac{1}{2},0,\frac{1}{2},1\}$. So it is necessary to take $\la_3=\frac{1}{2}$ in $(\ref{Eq3})$. Whence $\la_3=\la_2=\frac{1}{2}$ and $\g_{123}=\g_{112}=0$ if $3\leq n$.

Deriving $e_0e_3+3e_1e_2=\frac{1}{2}e_3$, one has $e_0e_4+4e_1e_3+3e_2^2=\frac{1}{2}e_4$, that means

\begin{eqnarray}\label{Eq5}
\la_{4}+4\g_{134}+3\g_{224}=\frac{1}{2}
\end{eqnarray}
However, $d(e_0e_3)=e_0e_4+e_1e_3=\la_3e_4+\sum_{k=5}^na_{3,k-1}e_k$, which implies
\begin{eqnarray}\label{Eq6}
\la_4+\g_{134}=\la_3 \textrm{ et } a_{4,k}+\g_{13k}=a_{3,k-1}\quad (5\leq k\leq n)
\end{eqnarray}
We also have $e_1e_2=\sum_{k=4}^n\g_{12k}e_k$ because $\g_{123}=0$ and $d(e_1e_2)=e_1e_3+e_2^2=\sum_{k=5}^n\g_{12,k-1}e_k$, which implies $\g_{223}=0$ and $\g_{134}+\g_{224}=0$.

\vspace{0,5cm}

\noindent\textbf{Case $\dim_KA=2$ i.e $n=1$.}

We obviously have $e_0e_1=\frac{1}{2}e_1$, $e_1^2=0$, $e_0^2=e_0$ or $e_0^2=e_0+e_1$ all other product being zero.

\medskip
\begin{center}
	\begin{tabular}{|l|l|l|l|}
  \hline
    & $e_0$ & $e_1$ \\\hline
  $e_0$ & $e_0$&$\frac{1}{2}e_1$ \\\hline
  $e_1$& &$0$ \\\hline
\end{tabular}
\hspace{1cm}\hspace{1cm}
\begin{tabular}{|l|l|l|l|}
  \hline
    & $e_0$ & $e_1$ \\\hline
  $e_0$ & $e_0+e_1$&$\frac{1}{2}e_1$ \\\hline
  $e_1$& &$0$ \\\hline
\end{tabular}
\end{center}

\medskip
\noindent\textbf{Case $\dim_KA=3$ i.e $n=2$.}

Because of $(\ref{Eq2})$ we have $\la_2+\g_{112}=\frac{1}{2}$. Let's discuss the possible values of $\la_2$.

$\ast$ $\la_2=0$ $\Rightarrow$ $\g_{112}=\frac{1}{2}$, so
$e_0^2=e_0$, $e_0e_1=\frac{1}{2}e_1$, $e_1^2=\frac{1}{2}e_2$ all other product being zero.

\medskip
\begin{center}
\begin{tabular}{|l|l|l|l|}
  \hline
    & $e_0$ & $e_1$&$e_2$ \\\hline
  $e_0$ & $e_0$&$\frac{1}{2}e_1$&$0$ \\\hline
  $e_1$& &$\frac{1}{2}e_2$& $0$\\\hline
  $e_2$& && $0$\\\hline
\end{tabular}
\end{center}

\medskip

$\ast$ $\la_2=\frac{1}{2}$ $\Rightarrow$ $\g_{112}=0$, so
$e_0^2=e_0$ or $e_0^2=e_0+e_2$, $e_0e_1=\frac{1}{2}e_1$, $e_0e_2=\frac{1}{2}e_2$ all other product being zero.

\medskip
\begin{center}
\begin{tabular}{|l|l|l|l|}
  \hline
    & $e_0$ & $e_1$&$e_2$ \\\hline
  $e_0$ & $e_0$&$\frac{1}{2}e_1$&$\frac{1}{2}e_2$ \\\hline
  $e_1$& &$0$& $0$\\\hline
  $e_2$& && $0$\\\hline
\end{tabular}
\hspace{1cm}
\begin{tabular}{|l|l|l|l|}
  \hline
    & $e_0$ & $e_1$&$e_2$ \\\hline
  $e_0$ & $e_0+e_2$&$\frac{1}{2}e_1$&$\frac{1}{2}e_2$ \\\hline
  $e_1$& &$0$& $0$\\\hline
  $e_2$& && $0$\\\hline
\end{tabular}
\end{center}

\medskip

$\ast$ $\la_2=1$ $\Rightarrow$ $\g_{112}=-\frac{1}{2}$, so
$e_0^2=e_0$, $e_0e_1=\frac{1}{2}e_1$, $e_0e_2=e_2$, $e_1^2=-\frac{1}{2}e_2$ all other product being zero.

\vspace{0,5cm}
\begin{center}
\begin{tabular}{|l|l|l|l|}
  \hline
    & $e_0$ & $e_1$&$e_2$ \\\hline
  $e_0$ & $e_0$&$\frac{1}{2}e_1$&$e_2$ \\\hline
  $e_1$& &$-\frac{1}{2}e_2$& $0$\\\hline
  $e_2$& && $0$\\\hline
\end{tabular}
\end{center}

\medskip

\noindent\textbf{Case $\dim_KA=4$ i.e $n=3$.}

Because of the preliminary calculations, $\la_3=\la_2=\la_1=\frac{1}{2}$, $\g_{112}=\g_{123}=0$ and $a_{2,3}+\g_{113}=0$. So  $e_0e_3=\frac{1}{2}e_3$ $\Rightarrow$ $e_3\in A_{\frac{1}{2}}$
and $e_1^2=\g_{113}e_3$. Since $A_{\frac{1}{2}}^2\subseteq A_{0}+A_{1}$ we have $\g_{113}=0$ implying $a_{2,3}=0$ and finally $e_0e_2=\frac{1}{2}e_2$. So we have the following multiplication table : $e_0e_1=\frac{1}{2}e_1$, $e_0e_2=\frac{1}{2}e_2$, $e_0e_3=\frac{1}{2}e_3$, $e_0^2=e_0$ or $e_0^2=e_0+e_3$, all other product being zero.

\vspace{0,5cm}
\begin{center}
\begin{tabular}{|l|l|l|l|l|}
  \hline
    & $e_0$ & $e_1$&$e_2$ &$e_3$\\\hline
  $e_0$ & $e_0$&$\frac{1}{2}e_1$&$\frac{1}{2}e_2$ &$\frac{1}{2}e_3$\\\hline
  $e_1$& &$0$& $0$&$0$\\\hline
  $e_2$& && $0$&$0$\\\hline
  $e_3$& && &$0$\\\hline
\end{tabular}
 \hspace{1cm}
 \begin{tabular}{|l|l|l|l|l|}
  \hline
    & $e_0$ & $e_1$&$e_2$ &$e_3$\\\hline
  $e_0$ & $e_0+e_3$&$\frac{1}{2}e_1$&$\frac{1}{2}e_2$ &$\frac{1}{2}e_3$\\\hline
  $e_1$& &$0$& $0$&$0$\\\hline
  $e_2$& && $0$&$0$\\\hline
  $e_3$& && &$0$\\\hline
\end{tabular}
\end{center}

\medskip

\noindent\textbf{Case $\dim_KA =5$ i.e $n=4$.}

\vspace{0,5cm}

$\ast$ $\la_4=0$ $\Rightarrow$ $\g_{134}=\frac{1}{2}$ because of $(\ref{Eq6})$.
Since $e_1e_3=\g_{134}e_4=\frac{1}{2}e_4$ we have $e_3\in A_{\frac{1}{2}}$ because  $A_{\frac{1}{2}}^2\subseteq A_{0}+A_{1}$, $A_{\frac{1}{2}}A_{1}\subseteq A_{\frac{1}{2}}$, $A_{\frac{1}{2}}A_{0}\subseteq A_{\frac{1}{2}}$. So $e_0e_3=\frac{1}{2}e_3$ and  $a_{3,4}=0$ $\Rightarrow$ $\g_{124}=0$ because of $(\ref{Eq4})$ and finally $\g_{113}=a_{2,3}=0$.
In the same way $e_2^2=-\frac{1}{2}e_4$ $\Rightarrow$ $e_2\in A_e(0)$ or $e_2\in A_{\frac{1}{2}}$, because $A_e(1/2)^2\subseteq A_e(0)+A_e(1)$ and $A_e(0)^2\subseteq A_e(0)$. But $e_0e_2=\frac{1}{2}e_2+a_{2,4}e_4$ $\Rightarrow$ $e_2\in A_{\frac{1}{2}}$ so $a_{2,4}=0=\g_{114}$ because of $(\ref{Eq2})$. Whence the following multiplication table

\vspace{0,5cm}
\begin{center}
\begin{tabular}{|l|l|l|l|l|l|}
  \hline
    & $e_0$ & $e_1$&$e_2$ &$e_3$&$e_4$\\\hline
  $e_0$ & $e_0$&$\frac{1}{2}e_1$&$\frac{1}{2}e_2$ &$\frac{1}{2}e_3$&$0$\\\hline
  $e_1$& &$0$& $0$&$\frac{1}{2}e_4$&$0$\\\hline
  $e_2$& & &$-\frac{1}{2}e_4$&$0$&$0$\\\hline
  $e_3$& && &$0$&$0$\\\hline
  $e_4$& && &&$0$\\\hline
\end{tabular}
\end{center}

\vspace{0,5cm}

$\ast$ $\la_4=\frac{1}{2}$ $\Rightarrow$ $\g_{134}=\g_{224}=0$ because of $(\ref{Eq5})$ and $(\ref{Eq6})$. We have $e_1e_2=\g_{1,2,4}e_4\in A_{\frac{1}{2}}$ $\Rightarrow$ $e_2\in A_e(0)$ or $e_2\in A_e(1)$ this is a contradiction (because $e_0e_2=\frac{1}{2}e_2+a_{2,3}e_3$)
so $\g_{124}=0$ and then $a_{3,4}=\g_{113}=0$. Whence the following multiplication table:
\vspace{0,5cm}

\begin{center}
\begin{tabular}{|l|l|l|l|l|l|}
  \hline
    & $e_0$ & $e_1$&$e_2$ &$e_3$&$e_4$\\\hline
  $e_0$ & $e_0$&$\frac{1}{2}e_1$&$\frac{1}{2}e_2$ &$\frac{1}{2}e_3$&$\frac{1}{2}e_4$\\\hline
  $e_1$& &$0$& $0$& $0$&$0$\\\hline
  $e_2$& & &$0$&$0$&$0$\\\hline
  $e_3$& && &$0$&$0$\\\hline
  $e_4$& && &&$0$\\\hline
\end{tabular} \hspace{0,5cm}
\begin{tabular}{|l|l|l|l|l|l|}
  \hline
    & $e_0$ & $e_1$&$e_2$ &$e_3$&$e_4$\\\hline
  $e_0$ & $e_0+e_4$&$\frac{1}{2}e_1$&$\frac{1}{2}e_2$ &$\frac{1}{2}e_3$&$\frac{1}{2}e_4$\\\hline
  $e_1$& &$0$& $0$& $0$&$0$\\\hline
  $e_2$& & &$0$&$0$&$0$\\\hline
  $e_3$& && &$0$&$0$\\\hline
  $e_4$& && &&$0$\\\hline
\end{tabular}
\end{center}
\vspace{0,5cm}

$\ast$ $\la_4=1$ $\Rightarrow$ $e_4\in A_e(1)$ $\Rightarrow$ $\g_{134}=-\frac{1}{2}$

We have $e_1e_3=-\frac{1}{2}e_4$ $\Rightarrow$ $e_3\in A_{\frac{1}{2}}$ $\Rightarrow$ $e_0e_3=\frac{1}{2}e_3$ $\Rightarrow$ $a_{3,4}=0$. So $\g_{124}=\g_{113}=a_{2,3}=0$.
In the same way $e_2^2=\frac{1}{2}e_4$ $\Rightarrow$ $e_2\in A_{\frac{1}{2}}$ (because $e_2$ can not be in $A_e(1)$), $e_0e_2=\frac{1}{2}e_2$ $\Rightarrow$ $a_{2,4}=\g_{114}=0$.
Whence this table
\vspace{0,5cm}

\begin{center}
\begin{tabular}{|l|l|l|l|l|l|}
  \hline
    & $e_0$ & $e_1$&$e_2$ &$e_3$&$e_4$\\\hline
  $e_0$ & $e_0$&$\frac{1}{2}e_1$&$\frac{1}{2}e_2$ &$\frac{1}{2}e_3$&$e_4$\\\hline
  $e_1$& &$0$& $0$&$-\frac{1}{2}e_4$&$0$\\\hline
  $e_2$& & &$\frac{1}{2}e_4$&$0$&$0$\\\hline
  $e_3$& && &$0$&$0$\\\hline
  $e_4$& && &&$0$\\\hline
\end{tabular}
\end{center}

\subsection{Main results in general case}

\begin{theo}[Main theorem]\label{2TP}
Let $A$ be dimensionally nilpotent Lie triple non nilalgebra. Let $\{e_0,e_1, \ldots, e_n\}$ be an adapted basis of $A$. Then:

\noindent$1°)$ If $n=2p+1$, the multiplication table of $A$ is one of the two following:
\begin{itemize}
	\item[$(i)$] $e_0^2=e_0$, $e_0e_i=\frac{1}{2}e_i$ $(1\leq i \leq 2p+1)$, all other product being zero.
	\item[$(i')$] $e_0^2=e_0+e_{2p+1}$, $e_0e_i=\frac{1}{2}e_i$ $(1\leq i \leq 2p+1)$, all other product being zero.
\end{itemize}

\noindent$2°)$ If $n=2p$, the multiplication table of $A$ is one of the four following :
\begin{itemize}
	\item[$(i)$] $e_0^2=e_0$, $e_0e_i=\frac{1}{2}e_i$ $(1\leq i \leq 2p)$, all other product being zero.
	\item[$(i')$] $e_0^2=e_0+e_{2p}$, $e_0e_i=\frac{1}{2}e_i$ $(1\leq i \leq 2p)$, all other product being zero.
	\item[$(ii)$] $e_0^2=e_0$,  $e_0e_i=\frac{1}{2}e_i$ $(1\leq i \leq 2p-1)$, $e_0e_{2p}=0$, $e_ie_{2p-i}=\frac{1}{2}(-1)^{i-1}e_{2p}$, $(1\leq i \leq p)$, all other product being zero.
\item[$(iii)$] $e_0^2=e_0$, $e_0e_i=\frac{1}{2}e_i$ $(1\leq i \leq 2p-1)$, $e_0e_{2p}=e_{2p}$,
$e_ie_{2p-i}=\frac{1}{2}(-1)^{i}e_{2p}$ $(1\leq i \leq p)$, all other product being zero.
\end{itemize}
\end{theo}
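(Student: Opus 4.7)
The plan is to combine the Peirce inclusions from Lemmas~\ref{decomp1} and~\ref{decomp2} with iterated Leibniz-rule computations, starting from the base identity $e_0e_1=\tfrac12 e_1$ (which holds because $e_1=d(e_0)\in A_{e_0}(1/2)$). Together with the restriction $\la_k\in\{0,\tfrac12,1\}$ from Lemma~\ref{lem3}(iii), these ingredients rigidify most of the multiplication table, leaving a finite set of scalar parameters to be controlled by commutativity and a parity argument.

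First I would show that $\la_k=\tfrac12$ for every $1\le k\le n-1$. Iterating differentiation on $e_0e_1=\tfrac12 e_1$ and projecting onto each $e_{k+1}$ produces polynomial identities relating $\la_k$ and the structure constants $\g_{ijk}$, along the lines of (\ref{Eq2})--(\ref{Eq6}); combined with $\la_k\in\{0,\tfrac12,1\}$, an induction on $k$ (of the kind already carried out through $n\le 4$ in Section~\ref{Sect41}) forces $\la_k=\tfrac12$ for $1\le k\le n-1$, so that $e_1,\ldots,e_{n-1}\in A_{e_0}(1/2)$; only $\la_n\in\{0,\tfrac12,1\}$ remains free. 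Next, the Peirce inclusion $A_{e_0}(1/2)^2\subseteq A_{e_0}(0)+A_{e_0}(1)$ together with Lemmas~\ref{lem2} and~\ref{lem3} forces $e_ie_j=\phi(i,j)e_n$ for every $i,j\ge 1$, with $\phi(i,j)=\phi(j,i)$ by commutativity. Similarly, $A_{e_0}(1)A_{e_0}(1/2)\subseteq A_{e_0}(1/2)$ gives $a_{k,n}=0$ whenever $\la_n\ne\tfrac12$ (since then $e_n\notin A_{e_0}(1/2)$); in the case $\la_n=\tfrac12$ one checks $a_{k,n}=0$ by directly differentiating $e_0e_k=\tfrac12 e_k+a_{k,n}e_n$ (using $e_1e_k=0$) and inducting on $k$. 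If $\la_n=\tfrac12$, every $e_ie_j$ ($i,j\ge 1$) vanishes and the table is fixed by $e_0^2\in\{e_0,e_0+e_n\}$, giving (i), (i$'$) in both parities; the choice $e_0^2=e_0+e_n$ is precisely the pseudo-idempotent case with $t=e_n$, consistent with Lemma~\ref{d(t)}.

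It remains to treat $\la_n\in\{0,1\}$. Differentiating $e_ie_j=\phi(i,j)e_n$, using $d(e_n)=0$ and $e_ke_n=0$ for $k\ge 1$, yields the recurrence
\[
\phi(i+1,j)+\phi(i,j+1)=0\qquad(1\le i,j\le n-1),
\]
with boundary $\phi(k,n)=0$ for $k\ge 1$. A descending induction on anti-diagonals shows that every anti-diagonal $i+j\ne n$ vanishes, while on the main anti-diagonal one obtains $\phi(k,n-k)=(-1)^{k-1}\phi(1,n-1)$. Differentiating $e_0e_{n-1}=\tfrac12 e_{n-1}$ additionally gives the link $\phi(1,n-1)+\la_n=\tfrac12$. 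Commutativity $\phi(k,n-k)=\phi(n-k,k)$ then enforces $(-1)^{k-1}=(-1)^{n-k-1}$ whenever $\phi(1,n-1)\ne 0$, and this holds for every $k$ only when $n$ is even. Hence for $n=2p+1$ odd, $\la_n\in\{0,1\}$ is impossible and only tables (i), (i$'$) survive; for $n=2p$ even, $\la_n=0$ yields (ii) with coefficients $\tfrac12(-1)^{k-1}$ and $\la_n=1$ yields (iii) with coefficients $\tfrac12(-1)^{k}$. The pseudo-idempotent variant $e_0^2=e_0+t$ is automatically excluded in (ii) and (iii) since Lemma~\ref{d(t)} gives $t\in Ke_n$ while $t\in A_{e_0}(1/2)$ demands $e_n\in A_{e_0}(1/2)$, contradicting $\la_n\ne\tfrac12$.

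The main difficulty is the descending anti-diagonal induction that simultaneously kills every off-main-diagonal $\phi(i,j)$ and propagates the alternating sign on the main diagonal; combined with the commutativity constraint $\phi(k,n-k)=\phi(n-k,k)$, this is precisely what forces the parity dichotomy between $n$ odd (tables (i), (i$'$) only) and $n$ even (all four tables).
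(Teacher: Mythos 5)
Your local computations on the top anti-diagonal are sound and essentially reproduce the paper's systems $(S)$ and $(S'_p)$: the alternation $\phi(k,n-k)=(-1)^{k-1}\phi(1,n-1)$, the link $\phi(1,n-1)+\la_n=\tfrac12$ obtained from $d(e_0e_{n-1})$, the parity obstruction from commutativity that rules out $\la_n\in\{0,1\}$ when $n$ is odd, and the observation that the pseudo-idempotent variant forces $e_n\in A_{e_0}(1/2)$, hence $\la_n=\tfrac12$. But your global architecture discards the paper's induction on dimension (quotienting by the $d$-invariant ideal $Ke_n$ from Lemma~\ref{lem2} and invoking the theorem for $A/Ke_n$), and in doing so it leaves two genuine gaps. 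First, the assertion that $\la_k=\tfrac12$ for all $1\le k\le n-1$ is the heart of the theorem and is not proved: iterated differentiation of $e_0e_1=\tfrac12 e_1$ yields relations of the shape $\la_{k+1}+\sum_{i}\binom{k}{i}\g_{i,k+1-i,k+1}=\tfrac12$, which involve the structure constants of every lower anti-diagonal, and controlling those is exactly as hard as the theorem itself; the computations for $n\le 4$ do not extend by a routine ``induction on $k$''. Worse, there is a circularity: you deduce $e_ie_j\in Ke_n$ from $\la_k=\tfrac12$, but the relations that would establish $\la_k=\tfrac12$ require knowing beforehand that the intermediate components $\g_{ijm}$ ($m<n$) vanish, i.e.\ that $e_ie_j\in Ke_n$. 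The paper breaks this circle precisely by passing to $A/Ke_{n}$, where the full multiplication table is already known by induction.

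Second, the ``descending induction on anti-diagonals'' does not do what you claim. The recurrence $\phi(i+1,j)+\phi(i,j+1)=0$ relates entries lying on one and the same anti-diagonal $i+j+1$; it produces alternation within each anti-diagonal but no relation between distinct anti-diagonals, so there is nothing to descend along. Commutativity then kills the odd anti-diagonals $s$ (since $(-1)^{k-1}\ne(-1)^{s-k-1}$ when $s$ is odd), but for an even anti-diagonal $s<n$ alternation and commutativity are mutually consistent with a nonzero value $\phi(1,s-1)$. To kill those you need the extra relation $\phi(1,s-1)=-a_{s,n}$ (from differentiating $e_0e_{s-1}=\tfrac12 e_{s-1}+a_{s-1,n}e_n$) together with $a_{s,n}=0$; and when $\la_n\in\{0,1\}$ the train identity gives only $2a_{k,n}\la_n(\la_n-1)=0$, which is vacuous, while your Peirce appeal ($e_n\notin A_{e_0}(1/2)$) shows only that $e_k\in A_{e_0}(1/2)+Ke_n$, not that $a_{k,n}=0$. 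This is exactly the point where the paper's case-by-case Peirce arguments (as in the $n=4$, $\la_4\in\{0,1\}$ cases of Subsection~\ref{Sect41}) do real work, and your outline supplies no substitute for them.
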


\begin{proof}
Reason by recurrence on $n$. Subsection~\ref{Sect41} shows that the theorem is true when $n\leq 4$. Assume it is true until an order $n>4$ and let's show it remains true for $n+1$. Integer $n$ being either even or odd, we consider two cases :

$\mathbf{1)}$ $n=2p$ is even. The multiplication table of $A$ has the following form $e_0e_k=\frac12 e_k + 
a_{k,2p+1}e_{2p+1}$ $(1\leq k\leq 2p-1)$, $e_0e_{2p}=\la_{2p}e_{2p}+a_{2p,2p+1}e_{2p+1}$, $e_0e_{2p+1}=\la_{2p+1}e_{2p+1}$ and $e_ie_{2p-i}=\varepsilon_ie_{2p}+\g_{i,2p-i,2p+1}e_{2p+1}$, with $\varepsilon_i=0$, $\varepsilon_i=\frac{1}{2}(-1)^{i-1}$ or $\varepsilon_i=\frac{1}{2}(-1)^{i}$ $(i=1,\ldots,p)$ according to $\la_{2p}=\frac12$, $\la_{2p}=0$ or $\la_{2p}=1$, respectively. We have $d(e_ie_{2p-i})=e_ie_{2p+1-i}+e_{i+1}e_{2p-i}=\varepsilon_i e_{2p+1}$, and then the following system
	
\begin{displaymath}
\begin{cases}
e_1e_{2p}+e_{2}e_{2p-1}=\varepsilon_1 e_{2p+1}, & \hbox{} \\
\cdots\cdots, & \\
e_ie_{2p+1-i}+e_{i+1}e_{2p-i}=\varepsilon_i e_{2p+1}, & \hbox{}\\
\cdots\cdots, & \\
e_pe_{p+1}+e_{p+1}e_{p}=2e_pe_{p+1}=\varepsilon_p e_{2p+1}.
\end{cases}\tag{S}
\end{displaymath}
We see that $e_pe_{p+1}=\frac{1}{2}\varepsilon_p e_{2p+1}$, $e_{p-1}e_{p+2}=(\varepsilon_{p-1}-\frac{1}{2}\varepsilon_p)e_{2p+1}=\frac{3}{2}\varepsilon_{p-1}e_{2p+1}$, $e_{p-i}e_{p+1+i}=\frac{2i+1}{2}\varepsilon_{p-i}e_{2p+1}$, $e_1e_{2p}=\frac{2p-1}{2}\varepsilon_1 e_{2p+1}$. But $d(e_0e_{2p})=e_0e_{2p+1}+e_1e_{2p}=\la_{2p}e_{2p+1}$, that means $e_1e_{2p}=(\la_{2p}-\la_{2p+1})e_{2p+1}$, and also $\la_{2p}-\la_{2p+1}=\frac{2p-1}{2}\varepsilon_1$.

\begin{itemize}
\item[If] $\la_{2p}=0$, we have $\varepsilon_1=\frac{1}{2}$ and $\la_{2p+1}=-\frac{2p-1}{4}\not\in \{0,\frac{1}{2},1\}$, impossible.
\item[If] $\la_{2p}=1$, we have $\varepsilon_1=-\frac{1}{2}$ and $\la_{2p+1}=1+\frac{2p-1}{4}=\frac{2p+3}{4}\not\in \{0,\frac{1}{2},1\}$, impossible.
\item[Hence]   $\la_{2p}=\frac{1}{2}$, $\varepsilon_1=0$ and $\la_{2p+1}=\frac12$.
\end{itemize}
Since all the $\la_k$ are equal to $\frac12$ ($k\ne 2p+1$), applying $2L_{e_0}^3-3L_{e_0}^2+L_{e_0}=0$ to $e_k$, it follows that $2a_{k,2p+1}\la_{2p+1}(\la_{2p+1}-1)=0$. If $\la_{2p+1}=\frac12$, then we have $a_{k,2p+1}=0$, which means $e_0e_k=\frac12 e_k$ for all $k$. Hence $e_ie_j=0$, with $1\leq i\leq j\leq n$.
\medskip

$\mathbf{2)}$ $n=2p-1$ is odd. The multiplication table of $A$ has the following form $e_0e_k=\frac12 e_k + a_{k,2p}e_{2p}$ ($k=1,\ldots,2p-1$), $e_0e_{2p}=\la_{2p}e_{2p}$ and $e_ie_{2p-1-i}=\g_{i,2p-1-i,2p}e_{2p}$ ($i=1,\ldots,p-1$).
Deriving this last relation, we have $e_ie_{2p-i}+e_{i+1}e_{2p-1-i}=0$, and the following system
\begin{displaymath}
\begin{cases}
e_1e_{2p-1}+e_{2}e_{2p-2}=0, & \hbox{} \\
e_2e_{2p-2}+e_{3}e_{2p-3}=0, & \hbox{} \\
\cdots\cdots, & \\
e_{p-1}e_{p+1}+e_{p}^2=0.
\end{cases}\tag{$S'_p$}
\end{displaymath}
So $e_1e_{2p-1}=-e_2e_{2p-2}=e_3e_{2p-3}=\cdots =(-1)^{i-1}e_ie_{2p-i}=\cdots=(-1)^{p-1}e^2_p$, that means $e_ie_{2p-i}=(-1)^{i-1}e_1e_{2p-1}$. However, since $d(e_0e_{2p-1})=e_0e_{2p}+e_1e_{2p-1}=\frac{1}{2}e_{2p}$, we have $e_1e_{2p-1}=(\frac{1}{2}-\la_{2p})e_{2p}$. Because $\la_{2p}\in\{0,\frac{1}{2},1\}$, we consider three situations :

\begin{itemize}
	\item[If] $\la_{2p}=0$, we have $e_1e_{2p-1}=\frac{1}{2}e_{2p}$,  $e_ie_{2p-i}=\frac{1}{2}(-1)^{i-1}e_{2p}$ $(i=1,\ldots, p)$ and $e_0^2=e_0$.
	\item[If] $\la_{2p}=1$, we have $e_1e_{2p-1}=-\frac{1}{2}e_{2p}$,  $e_ie_{2p-i}=\frac{1}{2}(-1)^{i}e_{2p}$ $(i=1,\ldots, p)$ and $e_0^2=e_0$.
	\item[If] $\la_{2p}=\frac{1}{2}$, we have $e_1e_{2p-1}=e_ie_{2p-i}=0$ $(i=1\ldots p)$ and $2a_{i,2p}\la_{2p}(\la_{2p}-1)=0$ shows that $a_{i,2p-i}=0$. Hence $e_0e_i=\frac{1}{2}e_i$ $(i=1,\ldots,p)$. Furthermore we have, either $e_0^2=e_0$, or $e_0^2=e_0+e_{2p}$.
\end{itemize}
For cases $\la_{2p}=0$ and $\la_{2p}=1$, We just need to show $e_ie_j=0$ for $i+j<2p$. The following lemma completes the proof of the theorem. And Note~\ref{NOTE} shows that all algebras defined in this theorem are Lie triple.
\end{proof}

\begin{lem}
$e_0e_i=\frac12 e_i$ for $i=1,\ldots,n-1$.
\end{lem}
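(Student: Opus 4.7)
The plan is to proceed by induction on $i$. The base cases $i = 1, 2, 3$ are already established in the preliminary computations of Subsection~\ref{Sect41}. For the inductive step, assume $\lambda_j = \tfrac{1}{2}$ for $j = 1, \ldots, i-1$, with $4 \le i \le n-1$, and aim to show $\lambda_i = \tfrac{1}{2}$.

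By Lemma~\ref{lem3}(iii), $\lambda_i \in \{0, \tfrac{1}{2}, 1\}$, so it suffices to rule out $\lambda_i \in \{0, 1\}$. Two families of relations drive the argument. First, iterating $d$ on $e_0 e_1 = \tfrac{1}{2} e_1$ $m$ times yields
\begin{equation*}
e_0 e_{m+1} + \sum_{j=1}^{m} \binom{m}{j}\, e_j e_{m+1-j} = \frac{1}{2}\, e_{m+1}.
\end{equation*}
Second, applying $d$ to $e_0 e_{i-1} = \tfrac{1}{2} e_{i-1}$ and comparing coefficients of $e_i$ gives $\lambda_i + \gamma_{1,i-1,i} = \tfrac{1}{2}$. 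By the induction hypothesis, $e_1, \ldots, e_{i-1}$ all satisfy $e_0 e_j = \tfrac{1}{2} e_j$, so the Peirce multiplication rules (Lemma~\ref{decomp1}(i) in the idempotent case and Lemma~\ref{decomp2}(ii) in the pseudo-idempotent case) constrain the products $e_j e_{m+1-j}$ with $1 \le j, m+1-j \le i-1$ to lie in $A_{e_0}(0)\oplus A_{e_0}(1)$.

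Combining these relations for $m = i-1$ and $m = i$ produces a linear system in $\lambda_i$, $\lambda_{i+1}$, and the relevant $\gamma$-coefficients. Under the assumption $\lambda_i \in \{0, 1\}$, solving the system yields a value of $\lambda_{i+1}$ lying outside $\{0, \tfrac{1}{2}, 1\}$, contradicting Lemma~\ref{lem3}(iii). This is precisely the mechanism illustrated in Subsection~\ref{Sect41}, where $\lambda_4 \in \{0,1\}$ would have forced $\lambda_5 \in \{-\tfrac{3}{4}, \tfrac{7}{4}\}$. Hence $\lambda_i = \tfrac{1}{2}$. The vanishing of the off-diagonal coefficients $a_{i,k}$ for $i < k \le n-1$ then follows by applying $2L_{e_0}^3 - 3L_{e_0}^2 + L_{e_0} = 0$ to $e_i$ and using that $\lambda_k(\lambda_k - 1) \ne 0$ for $\lambda_k = \tfrac{1}{2}$, exactly as in the final step of Case~$\mathbf{1}$ of the proof of Theorem~\ref{2TP}.

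The main obstacle is the combinatorial bookkeeping: one must verify that the linear system extracted from the iterated derivations of $e_0e_1 = \tfrac{1}{2}e_1$ is non-degenerate and consistently propagates a wrong value $\lambda_i \ne \tfrac{1}{2}$ into a value of $\lambda_{i+1}$ outside $\{0, \tfrac{1}{2}, 1\}$. The induction stops at $i = n-1$ because $\lambda_n$ is genuinely free (there is no $e_{n+1}$ to which a contradiction can propagate), which matches the distinct admissible values of $\lambda_n$ appearing in cases $(i)$--$(iii)$ of Theorem~\ref{2TP}.
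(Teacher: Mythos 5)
Your plan hinges entirely on the assertion that ``under the assumption $\lambda_i\in\{0,1\}$, solving the system yields a value of $\lambda_{i+1}$ lying outside $\{0,\tfrac12,1\}$,'' and this is exactly the point you do not prove: you yourself flag the ``combinatorial bookkeeping'' as the main obstacle and then leave it open. The two relations you extract (the $m$-fold Leibniz expansion of $e_0e_1=\tfrac12e_1$ and the single derivative of $e_0e_{i-1}=\tfrac12e_{i-1}$) are not enough to close that system. Already in the passage from $n=4$ to $n=5$ that you cite, reaching $\lambda_5+\tfrac52\gamma_{134}=\tfrac12$ requires several further identities among the $\gamma$'s --- obtained by differentiating the products $e_1e_2$, $e_2^2$, $e_1e_3$ themselves, not only the products $e_0e_k$ --- and there is no argument in your proposal that the analogous web of relations is non-degenerate for general $i$. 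The Peirce constraint you invoke also does not help where you need it: it places $e_je_{m+1-j}$ in $A_{e_0}(0)\oplus A_{e_0}(1)$ only once the factors are known to lie in $A_{e_0}(1/2)$, and says nothing about the coefficient $\gamma_{j,m+1-j,i}$ of the basis vector $e_i$ whose Peirce component is precisely what is in question. So as written this is a genuine gap, not a routine verification.

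The paper closes the argument by a different mechanism, which you should note because it supplies the missing input. The lemma sits inside the induction on $n$ of Theorem~\ref{2TP}: the quotient $A/Ke_n$ already has one of the listed tables, so every product $e_ie_{2k-i}$ with $2k$ strictly below the top degree is a scalar multiple of $e_n$ and is therefore annihilated by $d$. Differentiating gives $e_ie_{2k-i+1}+e_{i+1}e_{2k-i}=0$, i.e.\ the telescoping system $(S_k)$, whose last line $2e_ke_{k+1}=0$ forces, by back-substitution, $e_ie_{2k+1-i}=0$ for all $i\le k$; in particular $e_1e_{2k}=0$, whence $e_0e_{2k+1}=\tfrac12e_{2k+1}$ exactly, and a short parity argument recovers the even indices. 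No discussion of forbidden values of $\lambda_{i+1}$ is needed. If you want to salvage your induction on $i$, you must either import this ``products below top degree are multiples of $e_n$'' fact from the outer induction, or actually prove the non-degeneracy of your linear system in general; at present neither is done.
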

\begin{proof}
One has $e_ie_{2k-i}=\g_{i,2k-i,n}e_{n}$ for $i=1,\ldots,k-1$. Deriving this we have $e_ie_{2k-i+1}+e_{i+1}e_{2k-i}=0$. By varying $i$ we have the following system
\begin{displaymath}
\begin{cases}
e_1e_{2k}+e_{2}e_{2k-1}=0, & \hbox{} \\
e_2e_{2k-1}+e_{3}e_{2k-2}=0, & \hbox{} \\
\cdots\cdots, & \\
e_{k-1}e_{k+2}+e_{k}e_{k+1}=0, & \hbox{}\\
e_ke_{k+1}+e_{k+1}e_{k}=2e_ke_{k+1}=0.
\end{cases}\tag{$S_k$}
\end{displaymath}
Going up the lines of this system we see that $e_ie_{2k+1-i}=0$ for $i=1,\ldots,k$ in particular $e_1e_{2k}=0$, so $e_0e_{2k+1}+e_1e_{2k}=\frac12 e_{2k+1}$ and $e_0e_{2k+1}=\frac12 e_{2k+1}$, $k=0,\ldots,p-1$.

Now we make a recurrence on $n$. Assume it is true until an order $n$. We distinguish two cases :
\begin{itemize}
	\item $n+1=2p+1$ is odd. We have $e_0e_{n+1}=\frac12 e_{n+1}$, which imposes $e_0e_n=\frac12 e_n$.
	\item $n+1=2p$ is even. Since $n=2p-1$ is odd, we have $e_0e_{n}=\frac12 e_n$.
\end{itemize}
\end{proof}

Since every commutative Jordan algebra is a Lie triple algebra, we have the following result:
\begin{coro}\label{Jordan}
Let $A$ be a commutative Jordan non nilalgebra, dimensionally nilpotent. Let $\{e_0,e_1, \ldots, e_n\}$ be an adapted basis of $A$. Then:

\noindent$1°)$ If $n=2p+1$, the multiplication table of $A$ is:

$e_0^2=e_0$, $e_0e_i=\frac{1}{2}e_i$ $(1\leq i \leq 2p+1)$, all other product being zero.

\noindent$2°)$ If $n=2p$, the multiplication table of $A$ is one of the following three tables :
\begin{itemize}
	\item[$(i)$] $e_0^2=e_0$, $e_0e_i=\frac{1}{2}e_i$ $(1\leq i \leq 2p)$, all other product being zero.
	\item[$(ii)$] $e_0^2=e_0$,  $e_0e_i=\frac{1}{2}e_i$ $(1\leq i \leq 2p-1)$, $e_0e_{2p}=0$, $e_ie_{2p-i}=\frac{1}{2}(-1)^{i-1}e_{2p}$, $(1\leq i \leq p)$, all other product being zero.
	\item[$(iii)$] $e_0^2=e_0$, $e_0e_i=\frac{1}{2}e_i$ $(1\leq i \leq 2p-1)$, $e_0e_{2p}=e_{2p}$,
	$e_ie_{2p-i}=\frac{1}{2}(-1)^{i}e_{2p}$ $(1\leq i \leq p)$, all other product being zero.
\end{itemize}
\end{coro}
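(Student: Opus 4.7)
The plan is to derive Corollary~\ref{Jordan} as a direct specialization of the Main Theorem~\ref{2TP}. Since every commutative Jordan algebra satisfies the Lie triple identity \eqref{Eq1}, the hypotheses of Theorem~\ref{2TP} apply: any dimensionally nilpotent non-nilpotent Jordan $K$-algebra $A$, together with an adapted basis $\{e_0,\ldots,e_n\}$, must have a multiplication table appearing in the list produced there. So it suffices to eliminate those tables in the Lie triple classification that cannot correspond to a Jordan algebra, namely the tables labeled $(i')$ in both parities of $n$.

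The key observation is that in each $(i')$-table, the element $e_0$ is a pseudo-idempotent. Indeed, one has $e_0^2 = e_0 + e_n$, $e_0e_n = \frac12 e_n$, and $e_n^2 = 0$, so the pair $(e_0,t)$ with $t = e_n \ne 0$ fulfills the definition of pseudo-idempotent. I would then argue that a Jordan algebra cannot contain a pseudo-idempotent. This uses the Hentzel theorem stated in Section~2: the ideal $L$ generated by the associators $(x^2, x, x)$ satisfies $A/L$ Jordan, and $L = 0$ precisely when $A$ itself is Jordan (since the Jordan identity amounts to $(x^2,x,y)=0$). By definition of pseudo-idempotent, $t \in L$, so in the Jordan setting $t = 0$, contradicting $t = e_n \ne 0$. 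This rules out $(i')$ in both cases and leaves exactly the tables $(i)$ for $n = 2p+1$ and $(i)$, $(ii)$, $(iii)$ for $n = 2p$.

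As a sanity check (and to guard against a vacuous statement) one should confirm that the surviving tables do describe Jordan algebras. For table $(i)$ this follows from Example~\ref{gametique}: it is, up to the quotient by $Ke_n$ when needed, the gametic algebra $G(n+1,2)$, which is a well-known Jordan (in fact special Jordan) algebra. For tables $(ii)$ and $(iii)$ in the even case, the structure is that of a train algebra of rank~$3$, which is Jordan by the result of~\cite{Ouat} referenced in the introduction; this could alternatively be verified by checking the linearized Jordan identity $[L_{e_0}, L_{e_0^2}] = 0$ directly on basis vectors, using that $e_ie_j = 0$ unless $i+j = 2p$ and that the products $e_ie_{2p-i}$ lie in the one-dimensional space $Ke_{2p}$.

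The main obstacle is really only the elimination of the pseudo-idempotent tables; the rest is bookkeeping. One could alternatively bypass the invocation of $L = 0$ by a short explicit computation showing $[L_{e_0^2}, L_{e_0}](e_0) = \tfrac14 e_n \ne 0$ in the $(i')$-tables, which directly contradicts the Jordan identity, but the structural argument via pseudo-idempotents is cleaner and highlights why the Jordan case is strictly more rigid than the general Lie triple case.
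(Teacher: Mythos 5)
Your proof is correct and follows essentially the same route as the paper: the paper's own proof is the one-line observation that the Corollary follows from Theorem~\ref{2TP} because Jordan algebras do not admit a pseudo-idempotent, which is exactly your elimination of the $(i')$ tables. Your added justification (that $t\in L$ by definition and $L=0$ in a Jordan algebra) and the check that the surviving tables really are Jordan simply make explicit what the paper leaves implicit.
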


\begin{proof}
	

The Corollary follows from Theorem~\ref{2TP} knowing that Jordan algebras do not admit pseudo-idempotent.
\end{proof}

\begin{note}\label{NOTE}
	1) Multiplication tables in Theorem~\ref{2TP}~$1)~(i)$ and in Corollary~\ref{Jordan}~$1)$ when $n=2p+1$ is odd are those of \textit{gametic algebras} $G(2p+2,2)$ (Example~\ref{gametique}). It is the same for those defined in Theorem~\ref{2TP}~$2)~(i)$ and in Corollary~\ref{Jordan}~$2)~(i)$ when $n=2p$ is even. These are gametic algebras $G(2p+1,2)$. They are caracterized as elementary train algebras with equation $x^2-\w(x)x=0$, in which $\w:A\rightarrow K$, $e_0\mapsto 1$, $e_i\mapsto 0$ is a homomorphism of algebras.
	
	2) Multiplication tables in Theorem~\ref{2TP}~$2)~(ii)$ and in Corollary~\ref{Jordan}~$2)~(ii)$, when $n=2p$ is even, are those of \textit{normal Bernstein algebras} of type $(2p,1)$. Normal Bernstein algebras are defined by equation $x^2y=\w(x)xy$. These are Bernstein-Jordan algebras, caracterized by the train equation $x^3-\w(x)x^2=0$ \cite{Ouat,Buse}.
	
	3) Multiplication tables in Theorem~\ref{2TP}~$2)~(iii)$ and in Corollary~\ref{Jordan}~$2)~(iii)$, when $n=2p$ is even, are those of the other class of \textit{train algebras of rank $3$ which are Jordan algebras} of type $(2p,1)$. They are defined by equation $x^3-2\w(x)x^2+\w(x)^2x=0$ \cite[Theorem~2.1]{Ouat}.
\end{note}
	
    4) Multiplication tables in Theorem~\ref{2TP}~$1)~(i')$ and $2)~(i')$ are those of \textit{train algebras} satisfying $x^3-\frac{3}{2}\w(x)x^2+\frac{1}{2}\w(x)^2x=0$. These are Lie triple algebras because of \cite[Proposition~5.2, (iii)]{BKO1}.

\end{document}